\newcommand{\ud}{\mathrm{d}}
\newcommand{\pd}{\partial}
\newcommand{\e}{\varepsilon}
\newcommand{\eps}{\varepsilon}
\newcommand{\bK}{\mathbb{K}}
\newcommand{\bN}{\mathbb{N}}
\newcommand{\bR}{\mathbb{R}}
\newcommand{\R}{\bR}
\newcommand{\N}{\bN}
\newcommand{\cA}{\mathcal{A}}
\newcommand{\cC}{\mathcal{C}}
\newcommand{\cE}{\mathcal{E}}
\newcommand{\cG}{\mathcal{G}}
\newcommand{\cN}{\mathcal{N}}
\newcommand{\csub}{\subset \subset}
\newcommand{\coleq}{\mathrel{\mathop:}=}
\newcommand{\fX}{\mathfrak{X}}
\DeclareMathOperator{\id}{id}
\DeclareMathOperator{\supp}{supp}
\DeclareMathOperator{\pr}{pr}
\DeclareMathOperator{\Hom}{Hom}
\newcommand{\tang}{\mathrm{T}}
\newcommand{\Cinf}{C^\infty}
\newcommand{\Lin}{\mathrm{L}}
\providecommand{\norm}[1]{\left\lVert#1\right\rVert}
\providecommand{\abso}[1]{\left\lvert#1\right\rvert}
\newcommand{\sk}[2]{\widetilde\cA_{#1}(#2)}
\newcommand{\lsk}[2]{\widetilde\cA_{#1}(#2)}
\newcommand{\ub}[2]{\hat\cA_{#1}(#2)}
\newcommand{\lub}[2]{\cA_{#1}(#2)}
\newcommand{\comp}{\subset\subset}
\title{Manifold-valued generalized functions in full Colombeau spaces}
\author{Michael Kunzinger\footnote{University of Vienna, Faculty of Mathematics,
Nordbergstr.\ 15, A-1090 Vienna, Austria, michael.kunzinger@univie.ac.at}, 
Eduard Nigsch\footnote{University of Vienna, Faculty of Mathematics,
Nordbergstr.\ 15, A-1090 Vienna, Austria, eduard.nigsch@univie.ac.at}}
\newtheorem{theorem}{Theorem}
\newtheorem{proposition}[theorem]{Proposition}
\newtheorem{lemma}[theorem]{Lemma}
\newtheorem{definition}[theorem]{Definition}
\newtheorem{remark}[theorem]{Remark}
\newcommand{\cD}{\mathcal{D}}
\newcommand{\mvb}[2]{\hat\cE[#1,#2]} 
\newcommand{\mvm}[2]{\hat\cE_M[#1,#2]} 
\newcommand{\mvq}[2]{\hat\cG[#1,#2]} 
\newcommand{\mvqt}[2]{\tilde\cG[#1,#2]} 
\newcommand{\vmb}[2]{\hat\cE^{\mathrm{VB}}[#1,#2]} 
\newcommand{\vmm}[2]{\hat\cE_M^{\mathrm{VB}}[#1,#2]} 
\newcommand{\vmq}[2]{\Hom_{\hat \cG}[#1,#2]} 
\newcommand{\vbsim}{\sim_{vb}}
\newcommand{\vbzsim}{\sim_{\mathrm{vb}0}}
\begin{document}

\date{}
\maketitle

\begin{abstract}
We introduce the notion of generalized function taking values in a smooth manifold into the
setting of full Colombeau algebras. After deriving a number of 
characterization results we also introduce a corresponding concept of generalized vector bundle homomorphisms and,
based on this, provide a definition of tangent map for such generalized functions. 

\vskip 1em

\noindent
{\em Keywords:} Algebras of generalized functions, manifold-valued generalized functions, full Colombeau algebras

\noindent
{\em MSC 2010:} Primary 46F30;
Secondary 46T30,
26E15

\end{abstract}

\section{Introduction}

When studying geometrical problems in the presence of singularities, linear distributional geometry (e.g., \cite{M,P}) has 
a number of natural limitations, in particular concerning nonlinear operations (tensor calculus, curvature).
For this reason, nonlinear theories of generalized functions based on Colombeau's construction (\cite{C1,C2,GKOS,O})
have been extended by various authors (e.g., \cite{DP,DD,GKOS,KS,V}) to a nonlinear distributional geometry capable of extending
the distributional approach to certain nonlinear situations, in particular in the setting of pseudo-Riemannian 
geometry. 

A major obstacle for modeling geometrical objects like, e.g., flows of singular vector fields or geodesics of
distributional space-time metrics in linear distribution theory is the absence of a concept of generalized
function taking values in a differentiable manifold. In the special Colombeau setting, this problem was addressed
in \cite{kun,kun2}; the resulting theory has found a number of applications both in the theory of 
generalized functions and in mathematical physics (cf.\ \cite{EG,GKOS,sheaves}).

While special Colombeau algebras have been successfully applied to many situations where a natural way 
of regularizing is available, they do not in general possess a canonical embedding of the space of distributions. 
In many applications, particularly in General Relativity, however, it is desirable to work in the setting of
such a canonical embedding. Indeed, the guiding principle of General Relativity is coordinate invariance (general 
covariance), so it appears natural to also consider covariant regularization procedures when modelling singularities
in this context. Without a canonical embedding of distributions, in order to obtain a covariant result it is necessary
to explicitly check coordinate invariance of the results thus achieved. This has been done in a number of cases, 
most prominently in the calculation of the distributional curvature of cosmic strings, see \cite{CVW,VW}.
Built-in coordinate independence of the entire construction allows to avoid this additional step. For an
in-depth discussion we refer to \cite{SV}. 

While already introduced in \cite{C1,C2}, the full setting originally 
was not diffeomorphism invariant, hence did not lend itself to applications
in geometry. Over the past 15 years, however, the theory has been restructured in order to 
incorporate coordinate invariance, first in the scalar case (\cite{ColMeril,GFKS,GKSV,Jel1,Jel2})
and recently also in the tensorial setting (\cite{GKSV2,sotonTF}). So far, however,
this theory does not allow to consider generalized functions taking values in smooth manifolds.
The present article supplies the necessary constructions to fill this gap.

The plan of the paper is as follows. In Section \ref{PN} we introduce some basic notations and recall
those parts of the local and global theory of full Colombeau algebras necessary for our approach.
In Section \ref{MF} we introduce manifold-valued generalized functions in this context. We characterize
moderateness and equivalence for these generalized functions, the basic idea being to reduce these
properties to the corresponding ones of scalar valued Colombeau functions via composition with
smooth functions on the target manifold. Finally, in Section \ref{Hom} we introduce generalized 
vector bundle homomorphisms and give analogous characterizations. As a main example we 
define the tangent map of any manifold-valued Colombeau generalized function.

\section{Preliminaries and Notation}\label{PN}

Throughout this article the letters $X$ and $Y$ will represent smooth paracompact Hausdorff manifolds of dimensions $\dim X=n$ and $\dim Y=m$. A vector bundle $E$ over $X$ with projection $\pi_E$ will be denoted by $\pi_E: E \to X$, as typical vector bundles we will use $\pi_E: E \to X$ and $\pi_F: F \to Y$ with dimensions $n'$ and $m'$ of the fibers, respectively. $\Hom(E,F)$ ($\Hom_c(E,F)$) denotes the space of (compactly supported) vector bundle homomorphisms from $E$ to $F$. $\Omega^n_c(U)$ is the space of compactly supported $n$-forms on $U$, an open subset of a manifold or of $\R^n$. For any open set $U \subseteq \bR^n$ we define $\hat\lambda: \Omega^n(U) \to \Cinf(U)$ to be the linear isomorphism assigning to an $n$-form $\omega \in \Omega^n_c(U)$ the smooth function $x \mapsto \omega(x)(e_1,\dotsc,e_n)$ on $U$, where $\{\,e_1,\dotsc,e_n\,\}$ is the Euclidean basis in $\bR^n$. Generally, our background reference for differential geometry is \cite{marsden}. Calculus of smooth functions on infinite-dimensional locally convex spaces is understood in the sense of the so-called convenient calculus of \cite{KM}. $B_r(x)$ for $r>0$ and $x \in \R^n$ denotes the open ball of radius $r$ around $x$ in $\R^n$, $\pr_i$ is the projection of a product onto the $i$th factor. Finally, we assume the reader to be familiar with the local and global full diffeomorphism invariant Colombeau algebras $\cG^d(\Omega)$ and $\hat\cG(X)$ and the corresponding symbols for basic spaces and subspaces of moderate and negligible functions ($\cE^d(\Omega)$, $\cE_M^d(\Omega)$, $\cN^d(\Omega)$; $\hat\cE(X)$, $\hat\cE_M(X)$, $\hat\cN(X)$, cf.~\cite{GKSV}), as well as the spaces $\lub q\Omega$ and $\ub qX$ which are used in the construction.

Given a mapping $R \in C^\infty(\ub0X \times X, Y)$ and charts $(V,\varphi)$ in $X$ and $(W,\psi)$ in $Y$, we define the local expression of $R$ with respect to these charts as
\[ R_{W,V} \coleq \psi \circ R \circ ((\varphi^* \circ \hat\lambda^{-1}) \times \varphi^{-1}). \]
This is a smooth function from $((\hat\lambda \circ \varphi_*) \times \varphi)((\ub0V \times V) \cap R^{-1}(W)) \subseteq \lub0{\bR^n} \times \R^n$ into $\psi(W) \subseteq \bR^m$.

Similarly, for vector bundles $E \to X$ and $F \to Y$ we consider a mapping $s \in \Cinf(\ub0X \times E, F)$ such that for each fixed $\omega$ the mapping $s(\omega, \cdot)$ is a vector bundle homomorphism from $E$ to $F$. We define the local expression of $s$ with respect to vector bundle charts $(V, \Phi)$ in $E$ and $(W, \Psi)$ in $F$ 
over charts $\varphi$ of $X$ and $\psi$ of $Y$ as
\[ s_{W,V} \coleq \Psi \circ s \circ ((\varphi^* \circ \hat\lambda^{-1})\times \Phi^{-1}), \]
which is a smooth function from $((\hat\lambda \circ \varphi_*) \times \Phi)((\ub0V \times \pi_E^{-1}(V)) \cap s^{-1}(\pi_F^{-1}(W)))$ into $\Psi(W)$.
Because $(\pr_1 \circ s_{W,V})(\varphi, x , \xi)$ does not depend on $\xi$ it makes sense to define
\[ s_{W,V}^{(1)}(\varphi,x) \coleq (\pr_1 \circ s_{W,V})(\varphi, x, 0).\]
This is a smooth function from the set of all pairs $(\phi, x) \in \lub0{\varphi(V)} \times \varphi(V)$ satisfying $\pi_F(s(\varphi^*(\hat\lambda^{-1}(\phi)), \Phi^{-1}(x,0))) \in W$ into $\psi(W) \subseteq \bR^m$. The definition of $s_{W,V}^{(1)}$ is compatible with a change of chart; the mapping thus defined on the manifold and having $s_{W,V}^{(1)}$ as local expression shall be denoted by $\underline{s} \in \Cinf(\ub0X \times X, Y)$.

Next we note that $\pr_2 \circ s_{W,V}$ is smooth into $\R^{m'}$ and linear in the third variable. By the exponential law \cite[3.12]{KM} it corresponds to a smooth mapping denoted by $s_{W,V}^{(2)}$ from $((\hat\lambda \circ \varphi_*) \times \Phi)((\ub0V \times \pi_E^{-1}(V)) \cap s^{-1}(\pi_F^{-1}(W))$ into $\Lin(\R^{n'}, \R^{m'})$, the space of all linear mappings from $\R^{n'}$ to $\R^{m'}$. With this we can write the local expression of $s$ in the form
\[ s_{W,V}(\phi,x,\xi) = (s_{W,V}^{(1)}(\phi, x), s_{W,V}^{(2)}(\phi,x) \cdot \xi) \]
for all $(\phi, x, \xi)$ in its domain of definition.

In case the target manifold is some finite-dimensional real space we use the identity chart and simply write $R_V$ instead of $R_{W,V}$. Similarly, if $F$ is a trivial vector bundle over a finite-dimensional real space we write $s_V$, $s_V^{(1)}$ and $s_V^{(2)}$, accordingly.


The spaces of smoothing kernels $\sk q{X}$ are defined in \cite[Definition 3.3.5]{GKOS}. Their local equivalents are the spaces of local smoothing kernels $\lsk q{\Omega}$ on subsets $\Omega$ of $\R^n$ as defined in \cite[Definition 4.3]{Nig}:

\begin{definition}\label{localsmoothkern}\begin{enumerate}
\item[(1)]
A mapping $\tilde \phi \in C^\infty(I \times \Omega, \lub0\Omega)$ is called a \emph{local smoothing kernel} on $\Omega$ if it satisfies the following conditions:
\begin{enumerate}
\item[(i)] $\forall K \csub \Omega$ $\exists \e_0>0, C>0$ $\forall x \in K$ $\forall \e\le \e_0$: $\supp \tilde\phi(\e,x) \subseteq B_{\e C}(x) \subseteq \Omega$.
\item[(ii)] $\forall K \csub \Omega$ $\forall \alpha,\beta \in \bN_0^n$ the asymptotic estimate $\abso{ (\pd_y^\beta \pd_{x+y}^\alpha \tilde \phi) (\e,x)(y)}$ $=$ $O(\e^{-n-\abso{\beta}})$ holds uniformly for $x \in K$ and $y \in \Omega$.
\end{enumerate}
The space of all local smoothing kernels on $\Omega$ is denoted by $\lsk 0\Omega$.
\item[(2)] For each $k \in \bN$, $\lsk k\Omega$ is the subset of $\lsk0\Omega$ consisting of all $\tilde\phi$ such that for all $f \in C^\infty(\Omega)$ and all compact subsets $K$ of $\Omega$ the estimate $\abso{f(x) - \int_\Omega f(y)\tilde\phi(\e,x)(y)\, \ud y} = O(\e^{k+1})$ holds uniformly for $x \in K$.
\end{enumerate}
\end{definition}

On each chart $(V,\varphi)$ in $X$ there is an isomorphism $\sk q{V} \cong \lsk q{\varphi(V)}$ realized by the mapping $\Phi \mapsto \tilde\phi \coleq \hat\lambda \circ \varphi_* \circ \Phi \circ (\id \times \varphi^{-1})$.


 Using the local characterization of moderateness and negligibility established in \cite[Theorems 4.3 and 4.4]{GKSV} one immediately obtains that local smoothing kernels are suitable test objects for the local diffeomorphism invariant Colombeau algebra $\cG^d$, resulting in the following local characterization of moderateness and negligibility in $\hat\cE(X)$ (cf. \cite[Theorems 3.3.15 and 3.3.16]{GKOS}):

\begin{proposition}
 \begin{enumerate}[(i)]
  \item $R \in \hat\cE(X)$ is moderate if and only if for all charts $(V, \varphi)$ in $X$, $\forall K \csub \varphi(V)$ $\forall k \in \bN_0$ $\exists N \in \bN$ $\forall \tilde \phi \in \lsk0{\varphi(V)}$: 
$$
\sup_{x \in K}\norm{D^{(k)}(R_V(\tilde \phi(\e,x),x))} = O(\e^{-N}).
$$
\item $R \in \hat\cE_M(X)$ is negligible if and only if for all charts $(V, \varphi)$ in $X$, $\forall K \csub \Omega$ $\forall k \in \bN_0$ $\forall m \in \bN$ $\exists q \in \bN$ $\forall \tilde \phi \in \lsk q{\varphi(V)}$:
$$
\sup_{x \in K}\norm{D^{(k)}(R_V(\tilde \phi(\e,x),x))} = O(\e^m).
$$
 \end{enumerate}
\end{proposition}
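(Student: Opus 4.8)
The plan is to reduce the global conditions defining $\hat\cE_M(X)$ and $\hat\cN(X)$ to chartwise local conditions, and then to recognize these as the asymptotic estimates in the statement by transporting global smoothing kernels to local ones through the isomorphism $\sk qV \cong \lsk q{\varphi(V)}$.

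First I would recall that moderateness and negligibility of $R \in \hat\cE(X)$ are defined by seminorm estimates on $p \mapsto R(\Phi(\e,p),p)$, taken over smoothing kernels $\Phi \in \sk qX$ and compact subsets of $X$. Because the supports of smoothing kernels contract to points and differentiation is a local operation, these properties are local in nature and may be checked separately on each chart. This is exactly the content of the local characterization \cite[Theorems 4.3 and 4.4]{GKSV}: $R$ is moderate (resp.\ negligible) if and only if every local expression $R_V$ is moderate (resp.\ negligible) in $\cE^d(\varphi(V))$. Thus the first step replaces the global statement by the assertion that each $R_V$ lies in $\cE^d_M(\varphi(V))$ (resp.\ $\cN^d(\varphi(V))$) for all charts $(V,\varphi)$.

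The substantive step is then to show that in these local characterizations the class of admissible test objects may be narrowed to the concretely described local smoothing kernels of Definition \ref{localsmoothkern}. Necessity is trivial, as each $\tilde\phi \in \lsk q{\varphi(V)}$ is in particular an admissible local test object. For sufficiency one verifies that the support estimate (i) and the uniform derivative bounds (ii) reproduce precisely the scaling demanded of local test objects, and that the moment condition distinguishing $\lsk k{\varphi(V)}$ matches the order parameter $q$ entering the negligibility estimate. Reading off the quantifier structure then yields that moderateness of $R_V$ is equivalent to estimate (i) of the statement and negligibility to estimate (ii), with the bound uniform over $\tilde\phi$ as asserted.

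Finally I would make the correspondence explicit: under $\Phi \mapsto \tilde\phi = \hat\lambda \circ \varphi_* \circ \Phi \circ (\id \times \varphi^{-1})$ one has
\[ R(\Phi(\e,p),p) = R_V(\tilde\phi(\e,\varphi(p)),\varphi(p)) \qquad (p \in V), \]
so the global seminorms translate into the local ones of the statement; the chain-rule factors produced by $\varphi$ are smooth and, together with all their derivatives, bounded on compacta, hence do not affect the $\e$-asymptotics. Combining the three steps gives the two equivalences. The \emph{main obstacle} is the middle step, namely confirming that the abstract local test objects of \cite{GKSV} can be replaced by local smoothing kernels without altering the resulting notions; once this richness is established, the passage from global to local and the transport through the chart isomorphism are routine. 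Some care is needed to handle the order $q$ uniformly across charts and to match the precise quantifier order ($\forall k\,\exists N$, resp.\ $\forall m\,\exists q$) delivered by the cited theorems.
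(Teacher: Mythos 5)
Your proposal is correct and follows essentially the same route as the paper, which disposes of this proposition in a single sentence by citing the local characterizations of moderateness and negligibility in \cite{GKSV} together with the fact (from \cite{Nig}) that local smoothing kernels are suitable test objects for $\cG^d$, and then transporting through the chart isomorphism $\sk qV \cong \lsk q{\varphi(V)}$. You correctly identify the substantive point --- that the abstract local test objects can be replaced by local smoothing kernels without changing the resulting classes --- which is exactly the step the paper delegates to the cited references.
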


\section{Manifold-valued generalized functions}\label{MF}

We begin with the following definitions of the basic space of manifold-valued generalized functions and an appropriate notion of c-boundedness that is based on the corresponding notion of the special algebra (cf.\ \cite{kun,kun2}). For the full algebra one has to replace the index set by $\ub0X$ and the quantifier ``for small $\e$'' by the appropriate asymptotics used throughout the construction of full Colombeau algebras.

\begin{definition}$\mvb{X}{Y} \coleq \Cinf (\ub0X \times X, Y)$ is the basic space of (full) Colombeau generalized functions on $X$ taking values in $Y$.
\end{definition}

An element of the basic space $\mvb{X}{Y}$ is called c-bounded if it asymptotically maps 
compact sets to compact sets, more precisely: 

\begin{definition}$R \in \mvb{X}{Y}$ is called \emph{c-bounded}  if
\begin{multline}
 \forall K \csub X\ \exists L \csub Y\ \forall \Phi \in \sk0{X}\\
 \exists \e_0>0\ \forall \e<\e_0\ \forall p \in K: R(\Phi(\e,p),p) \in L.
\end{multline}
\end{definition}
In particular, for $Y=\bR$ or $\bK$ the above gives a definition of c-boundedness for elements of $\hat\cE(X)$.

For the quotient construction we recall that a generalized function $S \in \hat \cE(X)$ is defined to be moderate (see \cite[Def.\ 3.10]{GKSV}) if
\begin{multline}
 \forall K\csub M\ \exists N\in \bN\ \forall l\in \bN_0\ \forall X_1,\dotsc,X_l \in \fX(M)\ \forall \Phi\in\sk0X:\\
 \sup_{p \in K} \abso{L_{X_1}\dotsc L_{X_l} S(\Phi(\e,p),p)} = O(\e^{-N}),
\end{multline}
where $\fX(M)$ denoted the space of smooth vector fields on $X$. In local notation this condition is equivalent to
\begin{multline}\label{scal_mod_local}
\forall \textrm{ charts }(V, \varphi)\textrm{ in }X\ \forall K \csub V\ \exists N\in \bN\ \forall \alpha\in \bN_0^n\ \forall \tilde \phi \in \lsk0{\varphi(U)}:\\
 \sup_{x \in \varphi(K)}\abso{\pd^\alpha S_U(\tilde\phi(\e,x),x)} = O(\e^{-N}).
\end{multline}

By analogy to the definition of moderateness for manifold-valued generalized functions in the special setting (\cite[Definition 2.2]{kun}) and condition \eqref{scal_mod_local} we are led to the following definition of moderateness in $\mvb{X}{Y}$.

\begin{definition}\label{mv_mod}An element $R \in \mvb{X}{Y}$ is called \emph{moderate} if
 \begin{enumerate}[(i)]
  \item $R$ is c-bounded,
  \item for all charts $(V,\varphi)$ in X and $(W,\psi)$ in Y, all $L \csub V$ and $L' \csub W$ and all $k \in \bN_0$ there exists $N \in \bN$ such that for all $\tilde\phi \in \lsk0{\varphi(V)}$ the estimate $\|D^{(k)} (R_{W,V}(\tilde\phi(\e,x),x)) \| = O(\e^{-N})$ holds uniformly for $x \in \varphi(L) \cap R_{W,V}(\tilde\phi(\e, \cdot), \cdot)^{-1}(\psi(L'))$.
 \end{enumerate}
 By $\mvm{X}{Y}$ we denote the space of all moderate elements of $\mvb{X}{Y}$.
\end{definition}

In order to obtain an equivalence relation for the quotient we adapt \cite[Definition 2.4]{kun}:

\begin{definition} \label{equivdef}
 Two elements $R,S \in \mvm{X}{Y}$ are called \emph{equivalent} (denoted by $R \sim S$) if 
\begin{enumerate}[(i)]
 \item for any Riemannian metric $h$ on $Y$, $\forall K \csub X$ $\exists N \in \bN_0$ $\forall \Phi \in \sk N{X}$: 
\begin{equation}
 \sup_{p \in K}d_h(R(\Phi(\e,p),p),S(\Phi(\e,p),p)) \to 0\quad(\e \to 0),
\end{equation}
 \item for all charts $(V,\varphi)$ in $X$ and $(W,\psi)$ in $Y$, all $L \csub V$ and $L' \csub W$ and all $k \in \bN_0$ and $m \in \bN$ there exists $N \in \bN_0$ such that for all $\tilde\phi \in \lsk N{\varphi(V)}$ we have the estimate 
$$
 \| D^{(k)}(R_{W,V}(\tilde\phi(\e,x),x)) -  D^{(k)} (S_{W,V}(\tilde\phi(\e,x),x))\| = O(\e^m)
$$ 
uniformly for $x \in \varphi(L) \cap R_{W,V}(\tilde\phi(\e, \cdot), \cdot)^{-1}(\psi(L'))
\cap S_{W,V}(\tilde\phi(\e, \cdot), \cdot)^{-1}(\psi(L'))$.
\end{enumerate}
If $R$, $S$ satisfy (i) and (ii) for $k = 0$ we call them equivalent of order $0$, written $R\sim_0 S$.
\end{definition}

\begin{remark} {\rm 
Definition \ref{equivdef} (i) is formulated with respect to the distance function $d_h$ induced by 
some Riemannian metric $h$ on $M$. Because both $R$ and $S$ are c-bounded it does not matter which Riemannian metric is chosen (cf., e.g., \cite{GKOS}, Lemma 3.2.5).
}
\end{remark}


\begin{definition}
The quotient $\mvq{X}{Y} := \mvm{X}{Y}/\sim$ is called the space of (full) Colombeau generalized
functions on $X$ taking values in $Y$.
\end{definition}

For $R\in \mvm{X}{Y}$ we denote by $[R]$ its equivalence class in $\mvq{X}{Y}$.

With these definitions one can show the analogues of \cite[Propositions 3.1 and 3.2]{kun2}:

\begin{proposition}\label{boundchar}Let $R \in \mvb{X}{Y}$. The following conditions are equivalent:
 \begin{enumerate}[(i)]
  \item $R$ is c-bounded.
  \item $f \circ R$ is c-bounded for all $f \in \Cinf(Y)$.
 \end{enumerate}
\end{proposition}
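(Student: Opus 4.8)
The plan is to prove the equivalence (i)$\Leftrightarrow$(ii) by exploiting the fact that c-boundedness is an asymptotic, compact-to-compact statement that can be detected either directly on $Y$ or through the family of smooth test functions on $Y$.

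\medskip

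\noindent\emph{Direction (i)$\Rightarrow$(ii).} This is the routine direction. Suppose $R$ is c-bounded and fix $f \in \Cinf(Y)$. Given $K \csub X$, c-boundedness produces an $L \csub Y$ such that, for every $\Phi \in \sk0{X}$, there is $\e_0 > 0$ with $R(\Phi(\e,p),p) \in L$ for all $\e < \e_0$ and all $p \in K$. Since $f$ is continuous and $L$ is compact, $f(L)$ is a compact (hence bounded) subset of $\bR$ (or $\bK$), and it contains $f(R(\Phi(\e,p),p))$ for the same range of $\e$ and $p$. A bounded subset of $\bR$ lies in a compact interval, so this is exactly the statement that $f \circ R$ is c-bounded as an element of $\hat\cE(X)$. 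No chart arguments are needed here; the implication follows directly from continuity of $f$ together with compactness of $L$.

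\medskip

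\noindent\emph{Direction (ii)$\Rightarrow$(i).} This is where the real work lies. Assume $f\circ R$ is c-bounded for every $f \in \Cinf(Y)$ and fix $K \csub X$; we must exhibit a single $L \csub Y$ that works uniformly over all $\Phi \in \sk0{X}$. The natural strategy is to argue by contradiction: if no such $L$ existed, one could, for each member of an exhausting sequence $L_j$ of compacta in $Y$, find a smoothing kernel $\Phi_j$, a point $p_j \in K$ and an $\e_j \to 0$ with $R(\Phi_j(\e_j,p_j),p_j) \notin L_j$, forcing the image points to "escape to infinity" in $Y$. The task is then to manufacture from this escaping sequence a single $f \in \Cinf(Y)$ on which $f \circ R$ fails to be c-bounded, contradicting (ii). The construction of $f$ uses that $Y$ is paracompact and Hausdorff: on a sequence of points leaving every compact set one can build a proper smooth function (or use that $Y$ carries a proper smooth exhaustion function) taking prescribed large values along the escaping points, so that $f \circ R$ becomes unbounded along $\e_j$. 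I would compose with such a proper exhaustion so that $f(R(\Phi_j(\e_j,p_j),p_j)) \to \infty$, directly contradicting c-boundedness of $f\circ R$. The main obstacle is bookkeeping with the quantifier over \emph{all} $\Phi \in \sk0{X}$ while only a sequence escapes: one must either diagonalize the $\Phi_j$ into a single kernel, or observe that c-boundedness of $f\circ R$ must hold for each $\Phi_j$ separately and then assemble the proper function so that it defeats the whole family at once. Paracompactness of $Y$ (guaranteeing a proper smooth exhaustion) and continuity of the evaluation $R(\cdot,\cdot)$ are the tools that make this assembly possible; verifying that the resulting $f$ is genuinely smooth and that the escape is incompatible with $f\circ R$ being c-bounded is the delicate step, and is essentially the same mechanism as in the special-algebra proof \cite[Proposition 3.1]{kun2}, transcribed to the full setting with the index set $\ub0X$ and smoothing kernels in place of nets.
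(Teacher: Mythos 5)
Your direction (i)$\Rightarrow$(ii) matches the paper's (which simply declares it clear). For (ii)$\Rightarrow$(i) you take a genuinely different route: the paper fixes a Whitney embedding $\iota:Y\to\R^N$, applies (ii) to the $N$ coordinate functions $\pr_i\circ\iota$ to obtain compacta $L_i\csub\R$, and concludes that $(\iota\circ R)(\Phi(\e,p),p)$ is trapped in the compact set $\prod_i L_i$, whence (using that the embedding is closed, so $\iota(Y)\cap\prod_i L_i$ is compact in $Y$) $R$ is c-bounded; no contradiction argument and no exhaustion of $Y$ by compacta is needed. Your idea of composing with a proper smooth exhaustion function $g:Y\to[0,\infty)$ is equally valid and in fact collapses to a one-line direct argument, so the ``delicate step'' you flag is illusory: in the definition of c-boundedness of $g\circ R$ the compact set $L'\subseteq[-C,C]$ is chosen \emph{before} the quantifier over $\Phi\in\sk0X$, i.e.\ uniformly in $\Phi$, so $L:=g^{-1}([-C,C])$ is a single compact subset of $Y$ (by properness) that witnesses c-boundedness of $R$ for all $\Phi$ simultaneously. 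No escaping sequences, no diagonalization of the $\Phi_j$, and no construction of an $f$ adapted to the escaping points is required --- one fixed test function suffices, just as the paper's $N$ fixed coordinate functions suffice. What your approach buys is economy (one scalar function instead of $N$, and no need to invoke closedness of the embedding); what the paper's buys is that it avoids even mentioning properness, hiding it in the standard fact that a Whitney embedding can be taken closed. Both arguments implicitly use second countability of $Y$ (for the embedding, respectively the exhaustion), which is a standard tacit strengthening of the paper's blanket paracompactness hypothesis. As written your proof is a correct strategy but leaves the decisive step as a sketch hedged with uncertainty; once the uniformity of $L'$ in $\Phi$ is observed, it closes cleanly.
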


\begin{proof}(i) $\Rightarrow$ (ii) is clear.

(ii) $\Rightarrow$ (i): Let $\iota: Y \to \R^N$ be a Whitney embedding and let $K \csub X$. By assumption there are compact sets $L_i \csub \R$ such that $\forall \Phi \in \sk0X$ $\exists \e_0>0$ $\forall \e<\e_0$: $(\pr_i \circ \iota \circ R)(\Phi(\e,p),p) \in L_i$ for all $p \in K$. This implies that $(\iota \circ R)(\Phi(\e,p),p)$ is contained in a compact set for the same $\Phi$, $\e$, and $p$, from which the claim is immediate.
\end{proof}

Note that -- in contrast to the situation in the special algebra -- it does not seem to be the case that moderateness of $f \circ R$ for all $f \in \Cinf(Y)$ implies c-boundedness of $R$.

\begin{proposition} \label{modchar}
Let $R \in \mvb{X}{Y}$. The following statements are equivalent:
 \begin{enumerate}[(a)]
  \item $R \in \mvm{X}{Y}$.
  \item \begin{enumerate}[(i)]
    \item $R$ is c-bounded,
    \item $f \circ R \in \hat\cE_M(X)$ for all $f \in \cD(Y)$.
    \end{enumerate}
  \item \begin{enumerate}[(i)]
    \item $R$ is c-bounded,
    \item $f \circ R \in \hat\cE_M(X)$ for all $f \in \Cinf(Y)$.
    \end{enumerate}
 \end{enumerate}
\end{proposition}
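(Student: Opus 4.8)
The plan is to prove the three conditions equivalent by the cyclic chain (a) $\Rightarrow$ (c) $\Rightarrow$ (b) $\Rightarrow$ (a). The implication (c) $\Rightarrow$ (b) is immediate, since $\cD(Y) \subseteq \Cinf(Y)$. Both of the substantial directions rest on one local factorization: on the part of its domain where $R$ takes values in a chart $(W,\psi)$ of $Y$, the local expression on a chart $(V,\varphi)$ of $X$ of the scalar function $f \circ R$ splits as $(f\circ R)_V = (f\circ\psi^{-1}) \circ R_{W,V}$, the composition of the smooth function $f \circ \psi^{-1}$ on $\psi(W)$ with the local expression $R_{W,V}$. This is what lets one pass between the manifold-valued derivative estimates of Definition \ref{mv_mod}(ii) and the scalar local moderateness condition \eqref{scal_mod_local}.

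For (a) $\Rightarrow$ (c) I would fix $f \in \Cinf(Y)$, a chart $(V,\varphi)$ in $X$ and a compact $K \csub V$, and verify \eqref{scal_mod_local} for $f \circ R$. First, c-boundedness (in localized form: transporting, via the chart isomorphism $\sk0V \cong \lsk0{\varphi(V)}$ and extension, a local kernel $\tilde\phi \in \lsk0{\varphi(V)}$ to an element of $\sk0X$ agreeing with it near $K$ for small $\e$) yields a single compact $L \csub Y$ containing all values $R(\varphi^*(\hat\lambda^{-1}(\tilde\phi(\e,x))),\varphi^{-1}(x))$ for $x \in \varphi(K)$ and $\e$ small. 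I would then cover $L$ by finitely many chart domains $(W_j,\psi_j)$, choose compacts $L'_j \csub W_j$ and a partition of unity $\{\chi_j\}$ subordinate to $\{W_j\}$ with $\supp(\chi_j)$ contained in $L'_j$ and $\sum_j \chi_j \equiv 1$ on a neighborhood of $L$, so that $f \circ R = \sum_j (\chi_j f)\circ R$ wherever $R$ lands in $L$. Each summand factors as $((\chi_j f)\circ R)_V = ((\chi_j f)\circ\psi_j^{-1})\circ R_{W_j,V}$, with $(\chi_j f)\circ\psi_j^{-1}$ extended by zero to a compactly supported smooth function on $\psi_j(W_j)$. Applying the chain rule (Fa\`{a} di Bruno), each $\pd^\alpha$ of the composition is a sum of products of derivatives of $(\chi_j f)\circ\psi_j^{-1}$, which are bounded because $R_{W_j,V}(\tilde\phi(\e,x),x)$ stays in a compact subset of $\psi_j(W_j)$, times derivatives of $R_{W_j,V}(\tilde\phi(\e,x),x)$, which are $O(\e^{-N})$ by Definition \ref{mv_mod}(ii) applied with the chart $W_j$ and the compact $L'_j$. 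Summing over $j$ then gives the bound required by \eqref{scal_mod_local}.

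For (b) $\Rightarrow$ (a), c-boundedness is hypothesis (b)(i), so only Definition \ref{mv_mod}(ii) must be derived. I would fix charts $(V,\varphi)$, $(W,\psi)$, compacts $L \csub V$, $L' \csub W$ and $k$, pick a cutoff $\chi \in \cD(W)$ with $\chi \equiv 1$ on a neighborhood of $L'$, and set $g_i \coleq \chi\cdot\psi^i \in \cD(Y)$ for the coordinate functions $\psi = (\psi^1,\dotsc,\psi^m)$. By (b)(ii), $g_i \circ R \in \hat\cE_M(X)$, so \eqref{scal_mod_local} provides, on the chart $V$ and compact $L$, an exponent $N_i$ with $\sup_{x\in\varphi(L)}\abso{\pd^\alpha (g_i\circ R)_V(\tilde\phi(\e,x),x)} = O(\e^{-N_i})$. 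On the restricted set $x \in \varphi(L)\cap R_{W,V}(\tilde\phi(\e,\cdot),\cdot)^{-1}(\psi(L'))$ the value of $R$ lies in the open set where $\chi\equiv1$, so $g_i\circ R = \psi^i\circ R = \pr_i\circ R_{W,V}$ there; since this is a local coincidence in $x$, all $x$-derivatives agree, giving $\pd^\alpha(\pr_i\circ R_{W,V}(\tilde\phi(\e,x),x)) = O(\e^{-N_i})$ on that set. Taking $N = \max_i N_i$ and collecting the components $i$ and the multi-indices $\abso{\alpha}\le k$ yields $\norm{D^{(k)}(R_{W,V}(\tilde\phi(\e,x),x))} = O(\e^{-N})$, which is Definition \ref{mv_mod}(ii).

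The main obstacle I expect is bookkeeping of domains rather than any hard estimate. One must check that the localized form of c-boundedness correctly mediates between the global smoothing kernels $\sk0X$ in the c-boundedness definition and the local kernels $\lsk0{\varphi(V)}$ used in the derivative estimates, and that the support of each cutoff is matched with the compact $L'$ (resp.\ $L'_j$) appearing in the domain restriction of Definition \ref{mv_mod}(ii), so that the scalar estimates are invoked exactly on the sets where they hold and the partition-of-unity reassembly reproduces $f\circ R$ on the image compact $L$. The chain-rule expansion itself is routine once c-boundedness confines the image to a fixed compact set on which all derivatives of the smooth target coefficients are bounded.
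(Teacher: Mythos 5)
Your proposal is correct and follows essentially the same route as the paper: for (b)\,$\Rightarrow$\,(a) you use cut-off coordinate functions $\chi\cdot\psi^i\in\cD(Y)$ agreeing with $\psi$ near $L'$ and the local coincidence of derivatives, exactly as in the paper's proof, and for (a)\,$\Rightarrow$\,(c) you use c-boundedness to confine the image to a compact $L$, cover $L$ by finitely many charts, factor $(f\circ R)_V$ through the local expressions $R_{W_j,V}$, and apply the chain rule. The only cosmetic difference is that you reassemble $f\circ R$ with a partition of unity where the paper selects a chart index pointwise for each $(\e,p)$; both are valid.
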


\begin{proof}
 (c) $\Rightarrow$ (b) is clear.

 (b) $\Rightarrow$ (a): Let charts $(V, \varphi)$ in $X$ and $(W,\psi)$ in $Y$, compact subsets $L \csub V$ and $L' \csub W$, $k \in \bN_0$ and $\tilde \phi \in \lsk 0{\varphi(V))}$ be given. Choose $f \in \cD(Y)^m$ with $\supp f \subseteq W$ and $f = \psi$ in a neighborhood of $L'$ and set $f_j \coleq \pr_j \circ f$ and $\psi_j \coleq \pr_j \circ \psi$. Then for any $x \in \varphi(L \cap R_{W,V}(\tilde \phi(\e, \cdot),\cdot)^{-1}(\psi(L')))$ the equality
\begin{align*}
\psi_j \circ (\psi^{-1} \circ R_{W,V}(\tilde \phi(\e, \cdot),\cdot)) & = f_j \circ (\psi^{-1} \circ R_{W,V}(\tilde \phi(\e, \cdot), \cdot)) \\
&= (f_j \circ R)_W(\tilde \phi(\e, \cdot), \cdot)
\end{align*}
holds in a neighborhood of $x$. Because $f_j \circ R$ is moderate the result follows by differentiating.

 (a) $\Rightarrow$ (c): Let $f \in \Cinf(Y)$ and $K \csub X$. Without loss of generality we may assume $K \csub V$ for some chart $(V, \varphi)$ in $X$. Because $R$ is c-bounded we may choose $L \csub Y$ such that
\[ \forall \Phi \in \sk0X\ \exists \e_0>0\ \forall \e<\e_0\ \forall p \in K: R(\Phi(\e,p),p)) \in L. \]
We cover $L$ by charts $(W_l, \psi_l)$ in $Y$ with $1 \le l \le s$ and write $L = \bigcup_{l=}^s L_l'$ with $L_l' \csub W_l$ for each $l$. Now given any $\Phi \in \sk0X$, for all small $\e$ and each $p \in K$ there is $l$ such that $R(\Phi(\e,p),p) \in L_l'$ and thus
\[ 
(f \circ R)_V(\tilde \phi(\e,\varphi(p)),\varphi(p)) = (f \circ \psi_l^{-1}) \circ R_{W_l, V}(\tilde\phi(\e,\varphi(p)),
\varphi(p)), 
\]
where $\tilde\phi \coleq \hat\lambda \circ \varphi_* \circ \Phi \circ (\id \times \varphi^{-1})$. For any $k\in \N$, applying Definition \ref{mv_mod} to $L \coleq K$, $L' \coleq L_l'$, $(W_l, \psi_l)$ we obtain $\e_1 = \min_{1 \le l \le s}\e_1^l$, $N = \max_{1 \le l \le s}N_l$ 
such that
\[ \sup_{x \in \varphi(K)} \abso{D^{(k)} (f \circ R)_V (\tilde \phi(\e, x), x)} = O(\e^{-N}), \]
hence $f \circ R$ is moderate.
\end{proof}

The following result characterizes the equivalence relation $\sim$ in $\mvm{X}{Y}$.

\begin{theorem} \label{equivchar}
Let $R,S \in \mvm{X}{Y}$. The following statements are equivalent:
 \begin{enumerate}[(i)]
  \item $R \sim S$.
  \item $R \sim_0 S$.
  \item For every Riemannian metric $h$ on $Y$,
\begin{multline}
 \forall K \csub X\ \forall m \in \bN\ \exists N \in \bN_0\ \forall \Phi \in \sk N{X}:\\
 \sup_{p \in K}d_h(R(\Phi(\e,p),p), S(\Phi(\e,p),p)) = O(\e^m)\quad(\e \to 0).
\end{multline}
  \item $(f \circ R - f \circ S) \in \hat\cN(X)$ for all $f \in \cD(Y)$.
  \item $(f \circ R - f \circ S) \in \hat\cN(X)$ for all $f \in \Cinf(Y)$.
 \end{enumerate}
\end{theorem}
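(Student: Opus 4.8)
The plan is to establish the cycle (i) $\Rightarrow$ (ii) $\Rightarrow$ (iii) $\Rightarrow$ (v) $\Rightarrow$ (iv) $\Rightarrow$ (i). The overarching strategy is the one already used in Propositions \ref{boundchar} and \ref{modchar}: express the intrinsic conditions (i)--(iii) through the scalar generalized functions $f \circ R - f \circ S$, and play off the negligibility theory for $\hat\cN(X)$ against c-boundedness, which lets one pass freely between compactly supported and arbitrary smooth test functions on $Y$. Two of the implications are immediate: (i) $\Rightarrow$ (ii) holds because $\sim_0$ only requires the estimates defining $\sim$ in the special case $k=0$, and (v) $\Rightarrow$ (iv) holds because $\cD(Y) \subseteq \Cinf(Y)$.

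For (ii) $\Rightarrow$ (iii) I would fix a Riemannian metric $h$, a compact $K \csub X$ (which, covering if necessary, I may take inside a single chart $(V,\varphi)$ of $X$) and $m \in \bN$. By c-boundedness the points $R(\Phi(\e,p),p)$ and $S(\Phi(\e,p),p)$ remain in a fixed compact set $L \csub Y$ for $p \in K$ and small $\e$, and by Definition \ref{equivdef}(i) their $d_h$-distance tends to $0$. Covering $L$ by finitely many charts $(W_l,\psi_l)$ and choosing a Lebesgue number for this cover, any two such $d_h$-close points of $L$ lie in a common $W_l$, on which $d_h$ is bounded by a constant times the Euclidean distance of the $\psi_l$-coordinates. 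That coordinate distance is precisely the $k=0$ quantity estimated in Definition \ref{equivdef}(ii), which for $\sim_0$ is $O(\e^m)$; taking the maximum over the finite cover of the finitely many resulting orders gives (iii). Observe that only zeroth-order information is used here.

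The implication (iii) $\Rightarrow$ (v) carries the actual analytic content. Given $f \in \Cinf(Y)$ and $K \csub X$, c-boundedness again confines the images to a compact set $L$, on which $f$ is Lipschitz with respect to $d_h$; hence $\abso{f(R(\Phi(\e,p),p)) - f(S(\Phi(\e,p),p))} \le \Lip(f|_L)\, d_h(\cdots) = O(\e^m)$ by (iii). This is exactly the zeroth-order negligibility estimate for the scalar object $f \circ R - f \circ S$, which is moderate by Proposition \ref{modchar}. I would then invoke the scalar fact that an element of $\hat\cE_M(X)$ is already negligible once its zeroth-order estimate holds, the higher-order estimates following from moderateness by the usual interpolation argument. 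This yields $f \circ R - f \circ S \in \hat\cN(X)$, i.e.\ (v).

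Finally, (iv) $\Rightarrow$ (i) recovers both defining conditions of $\sim$ from negligibility of the compactly supported compositions. For Definition \ref{equivdef}(ii), given charts $(V,\varphi)$ and $(W,\psi)$ and compacta $L \csub V$, $L' \csub W$, I would choose $f_j \in \cD(Y)$ with $f_j = \psi_j$ on a neighborhood of $L'$; on the region where both local expressions take values in $\psi(L')$ one has $(R_{W,V})_j = (f_j \circ R)_V$ and likewise for $S$, so the all-order negligibility of $f_j \circ R - f_j \circ S$ furnishes the required $O(\e^m)$ bounds on $\norm{D^{(k)}(R_{W,V}) - D^{(k)}(S_{W,V})}$ for every $k$. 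For Definition \ref{equivdef}(i), c-boundedness confines the images to a compact $L$; multiplying the components $\iota_j$ of a Whitney embedding $\iota \colon Y \to \bR^{N_0}$ by a cutoff $g \in \cD(Y)$ with $g \equiv 1$ near $L$ produces elements of $\cD(Y)$ whose compositions with $R$ and $S$ agree with $\iota_j \circ R$ and $\iota_j \circ S$ on $K$, so (iv) gives $\abso{\iota_j \circ R - \iota_j \circ S} \to 0$ and hence $d_h(R,S) \to 0$ by the comparison of $d_h$ with Euclidean distance on $L$. The main obstacle is the single genuinely analytic input, the zeroth-order characterization of scalar negligibility used in (iii) $\Rightarrow$ (v); the remaining steps are either formal or reuse the c-boundedness localization already exploited in Proposition \ref{modchar}.
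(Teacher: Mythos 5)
Your proposal is correct, and it uses the same basic toolbox as the paper (c-boundedness to confine images to a compact $L\comp Y$, finite chart covers of $L$, comparison of $d_h$ with Euclidean coordinate distance on compacta, the zeroth-order characterization of negligibility for moderate elements, and cutoffs/Whitney embeddings for passing between $\cD(Y)$ and $\Cinf(Y)$), but it arranges the logic differently. The paper proves (i)$\Rightarrow$(ii), then (ii)$\Leftrightarrow$(iii) as a separate equivalence, then (ii)$\Rightarrow$(iv)$\Rightarrow$(i) and (iv)$\Leftrightarrow$(v); you instead close a single cycle (i)$\Rightarrow$(ii)$\Rightarrow$(iii)$\Rightarrow$(v)$\Rightarrow$(iv)$\Rightarrow$(i). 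Two of your steps genuinely diverge in method. First, your (ii)$\Rightarrow$(iii) is a direct Lebesgue-number argument over a finite chart cover of $L$, whereas the paper argues by contradiction, extracting convergent subsequences $q_n\to q$ and localizing in a single chart around the limit; your version is cleaner and in fact mirrors the covering technique the paper itself uses later in (ii)$\Rightarrow$(iv). Second, your (iii)$\Rightarrow$(v) estimates $\abso{f\circ R-f\circ S}$ by an intrinsic $d_h$-Lipschitz bound for $f$ on $L$, avoiding the paper's chart-wise detour through $f\circ\psi_l^{-1}$ and \cite[Lemma 2.5]{kun}; this buys you (v) for all of $\Cinf(Y)$ in one stroke, at the price of having to verify that a smooth function is $d_h$-Lipschitz near a compact set (curves realizing $d_h$ up to $\eps$ must be confined to a compact neighborhood of $L$, which is where the paper's explicit use of convex chart images and the two-sided comparison of $d_h$ with coordinate distance comes in). These comparisons of metrics, which you gloss over, are exactly the points the paper spells out via the convexity argument in (iii)$\Rightarrow$(ii) and the choice of a locally Euclidean metric $g$ in (ii)$\Rightarrow$(iii); they are standard but should be stated, since $W_l$ must be chosen with convex (or at least geodesically controllable) images for the inequality $d_h\le C\,\norm{\psi_l(q_1)-\psi_l(q_2)}$ to hold. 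The single analytic input you identify, the reduction of negligibility to the zeroth-order estimate, is the same one the paper cites (\cite[Cor.\ 4.5]{GKSV}), so no idea is missing.
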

\begin{proof}
(i)$\Rightarrow$(ii) is clear.\\
(ii)$\Rightarrow$(iii): Suppose that (iii) is violated. Then there exists some Riemannian metric $h$ on $Y$,
some compact subset $K$ of $X$, and some $m_0\in \bN_0$ such that: $\forall n\in \bN_0$ $\exists \Phi_n\in 
\sk n{X}$ $\forall k\in \bN_0$ $\exists$ $\eps_{nk}$ such that $\eps_{nk}\searrow 0$ $(k\to \infty)$ $\exists
p_{nk}\in K$ with 
\begin{equation}\label{simproof1}
 d_h(R(\Phi_n(\eps_{nk},p_{nk}),p_{nk}),S(\Phi_n(\eps_{nk},p_{nk}),p_{nk}))) > \eps_{nk}^{m_0} \quad \forall k.
\end{equation}
By Definition\ \ref{equivdef} (i) there exists some $N$ such that for all $n\ge N$ we have
\begin{equation}\label{simproof2}
d_h(R(\Phi_n(\eps_{nk},p_{nk}),p_{nk}),S(\Phi_n(\eps_{nk},p_{nk}),p_{nk}))) \to 0 \quad (k\to \infty).
\end{equation}
By assumption, both $R$ and $S$ are c-bounded. Hence there is a compact subset $L$ of $Y$ such that $\forall n\in \N_0$ $\exists \eta_n>0$ $\forall \eps<\eta_n$ $\forall p\in K$: 
$R(\Phi_n(\e,p),p)$, $S(\Phi_n(\e,p),p) \in L$. For each $n\ge N$ choose some $k_n$ such that
$\eps_{nk} < \eta_n$ for all $k\ge k_n$. Then for each $n\ge N$ and each $k\ge k_n$, both
$R(\Phi_n(\eps_{nk},p_{nk}),p_{nk})$ and $S(\Phi_n(\eps_{nk},p_{nk}),p_{nk})$ are elements of $L$.

Since $L$ is compact we may assume, passing to subsequences if necessary, that for each $n\ge N$ 
the sequences $(R(\Phi_n(\eps_{nk},p_{nk}),p_{nk}))_k$, $(S(\Phi_n(\eps_{nk},p_{nk}),$ $p_{nk}))_k$
are convergent. By (\ref{simproof2}) they have the same limit $q_n\in L$. Again by passing to 
a subsequence we may additionally suppose that $q_n \to q\in L$. 

Choose a chart $(W,\psi)$ around $q$ with $\psi(q)=0$ and $\psi(W)=B_r(0)$. For $n$, $k$ sufficiently large, $R(\Phi_n(\eps_{nk},p_{nk}),p_{nk})$, 
$S(\Phi_n(\eps_{nk},p_{nk}),p_{nk})\in \psi^{-1}(B_{r/2}(0))$. Choose a Riemannian metric
$g$ on $Y$ such that $\psi_*(g|_{B_{r/2}})$ is the standard Euclidean metric on $\R^m$. 
Since $R\sim_0 S$ by assumption we conclude from Definition
\ref{equivdef} (ii) that
\begin{multline*}
d_g(R(\Phi_n(\eps_{nk},p_{nk}),p_{nk}),S(\Phi_n(\eps_{nk},p_{nk}),p_{nk}))) \le \\
\|\psi\circ R(\Phi_n(\eps_{nk},p_{nk}),p_{nk}) - \psi\circ S(\Phi_n(\eps_{nk},p_{nk}),p_{nk})\| 
\le \eps_{nk}^{2m_0} 
\end{multline*}
for $n$, $k$ sufficiently large. Since for some $C>0$, $d_h(q_1,q_2) \le C d_g(q_1,q_2)$ for all $q_1,\, q_2 \in L$ (cf.\ 
\cite{GKOS}, Lemma 3.2.5), this contradicts (\ref{simproof1}).

(iii)$\Rightarrow$(ii): We first note that (i) of Definition \ref{equivdef} is obvious. To
show (ii) of Definition \ref{equivdef} for $k=0$, let $L\subset\subset V$ for $(V,\varphi)$ some
chart in $X$ and $L'\subset\subset W$ for $(W,\psi)$ a chart in $Y$. Let us first assume
that $L'$ is contained in a convex (in the sense of \cite{ON}) set $W'$ with 
$\overline{W'} \subset\subset W$. Let $m\in \N$ and choose an $N$ suitable for $L$ and $m$ 
according to (iii). Let $\Phi\in \sk N{X}$ and $\eps_0>0$ such that for all $\eps<\eps_0$ and all $p\in L$
$$
d_h(R(\Phi(\e,p),p), S(\Phi(\e,p),p)) < \eps^m\,.
$$
Now let $\eps<\eps_0$ and $p\in L$ be such that $R(\Phi(\eps,p),p)$,
$S(\Phi(\eps,p),p)$ $\in$ $L'$. By convexity,
$$
d_h(R(\Phi(\e,p),p), S(\Phi(\e,p),p)) = \int_0^1 \|\gamma_\eps'(s)\|_h\,ds\,,
$$ 
where $\gamma_\eps:[0,1] \to W'$ is the unique geodesic in $W'$ connecting
$R(\Phi(\eps,p),p)$ and $S(\Phi(\eps,p),p)$. Since $W'$ is relatively compact there exists
some $C>0$ such that $\|\xi\| \le C \|T_{\psi(q)}\psi^{-1}(\xi)\|_h$ for all $q\in W'$
and all $\xi\in \R^m$ (with $\norm{\ }$ the Euclidean norm on $\R^m$). Thus
\begin{eqnarray*}
\|\psi\circ R(\Phi(\eps,p),p) - \psi\circ S(\Phi(\eps,p),p)\| &\le& \int_0^1\|(\psi\circ\gamma_\eps)'(s)\|\,ds \\
&\le& C \int_0^1 \|\gamma_\eps'(s)\|_h\,ds\,,
\end{eqnarray*}
which gives the result in this case. 

For general $L'$ we write $L'=\bigcup_{i=1}^k L_i'$ with $L_i'\comp W_i$,  $W_i$ convex
and $\overline{W_i} \comp W$ for all $i$. Given $m\in \N$ let $N$ be as in (iii) and
let $\Phi\in \sk N{X}$. Pick $\eps_0>0$ such that for all $p\in L$ and all $\eps<\eps_0$
$$
d_h(R(\Phi(\e,p),p), S(\Phi(\e,p),p)) < \min_{1\le i\le k} d_h(L_i',\partial W_i)\,.
$$
Then if $\eps<\eps_0$ and $p\in L$ is such that $R(\Phi(\e,p),p)$, $S(\Phi(\e,p),p))\in L'$
there exists an $i\in \{1,\dots,k\}$ with $R(\Phi(\e,p),p)$, $S(\Phi(\e,p),p))\in W_i$.
By what has been shown above this entails the claim.

(ii)$\Rightarrow$(iv): By \cite{GKSV}, Cor.\ 4.5 it suffices to show that, given any
$f\in \cD(Y)$, for each $K\comp X$ and each $m\in \N$ $\exists N\in \N_0$ $\forall \Phi\in \sk N{X}$
$\exists \e_0>0$ $\forall \e<\e_0$ $\forall p\in K$: 
$$
|f\circ R(\Phi(\eps,p),p) - f\circ S(\Phi(\eps,p),p)| < \eps^m\,.
$$
To this end we first observe that since $R$, $S$ are c-bounded there exists some $L\comp Y$
such that for all $\Phi\in \sk0X$ 
$\exists \eps_0>0$ $\forall \eps<\eps_0$ $\forall p\in K$: $R(\Phi(\eps,p),p)$, $S(\Phi(\eps,p),p) 
\in L$. We cover $L$ by open sets $W_l'$ such that $\overline{W_l'} \comp W_l$,
where $(W_l,\psi_l)$ is a chart in $Y$ for $l=1,\dots,s$.

By (i) in Definition \ref{equivdef} there is $N \in \N_0$ such that for given $\Phi\in \sk N{X}$ we may assume
$\eps_0$ to be so small that for any $p\in K$ and any $\eps<\eps_0$ there is an $l\in \{1,\dots,s\}$
with $R(\Phi(\eps,p),p)$, $S(\Phi(\eps,p),p)\in W_l'$ (this follows as in (ii)$\Rightarrow$(iii)).
Thus,
\begin{multline*}
|f\circ R(\Phi(\eps,p),p) - f\circ S(\Phi(\eps,p),p)| = \\
|(f\circ \psi_l^{-1})\circ \psi_l\circ R(\Phi(\eps,p),p) - (f\circ \psi_l^{-1})\circ \psi_l \circ S(\Phi(\eps,p),p)|\,.
\end{multline*}
By \cite{kun}, Lemma 2.5 there exists a constant $C>0$ (depending exclusively
on $\psi_l$, $f$ and $L$) such that this last expression
can be estimated by $\|\psi_l\circ R(\Phi(\eps,p),p) -  \psi_l \circ S(\Phi(\eps,p),p)\|$.
By (ii) from Definition \ref{equivdef} this concludes this part of the proof.

(iv)$\Rightarrow$(i): We first show (i) from Definition \ref{equivdef}. Using a Whitney-embedding
we may suppose that $Y\subseteq \R^{m'}$. Let $K\subset\subset X$. Since $R$ and $S$ are c-bounded, 
there exists some $L\subset\subset Y$ such that for all $\Phi\in \sk0{X}$ 
$\exists \eps_0>0$ $\forall \eps<\eps_0$ $\forall p\in K$: $R(\Phi(\eps,p),p)$, $S(\Phi(\eps,p),p) 
\in L$. 

Fix $i\in {1,\dots,m'}$. Denoting by $\mathrm{pr}_i: \R^{m'} \to \R$ the $i$-th projection, pick some
$f\in \cD(Y)$ such that $f=\mathrm{pr}_i$ in a neighborhood of $L$. Applying (iv) to this $f$ we obtain
the existence of some $N_i\in \N_0$ such that for all $\Phi \in \sk {N_i}{X}$ $\exists 0<\eps_i<\eps_0$ $\forall
\eps<\eps_i$ $\forall p\in K$:
$$
|\mathrm{pr}_i\circ R(\Phi(\eps,p),p) - \mathrm{pr}_i\circ S(\Phi(\eps,p),p)| < \eps\,.
$$
Setting $\bar N:=\max \{N_i\mid 1\le i \le m'\}$ this implies that for any $\Phi \in \sk {\bar N}{X}$
and any $\delta>0$ there exists some $\bar \eps > 0$ such that $\forall \eps<
\bar\eps$ $\forall p\in K$: 
\begin{equation}\label{simproof3}
\|R(\Phi(\eps,p),p) -  S(\Phi(\eps,p),p)\| < \delta\,,
\end{equation}
with $\|\,\|$ the Euclidean norm on $\R^{m'}$. Denoting by $g$ the Riemannian metric on $Y$ induced
by the standard Euclidean metric on $\R^{m'}$, it follows from \cite{V}, Lemma A.1 that there exists
a constant $C>0$ such that $d_g(q_1,q_2)\le C\|q_1 - q_2\|$ for all $q_1,\, q_2 \in L$.  Since 
$d_h$, in turn, can be estimated by $d_g$ on $L$ (as in (ii)$\Rightarrow$(iii) above), (\ref{simproof3})
therefore implies (i) from Definition \ref{equivdef}.

To also show (ii) from that definition, let $L$ be a compact subset of some chart $(V,\varphi)$ in $X$ and
$L'\subset\subset W$ for some chart $(W,\psi)$ in $Y$. Let $j\in \{1,\dots,m\}$ and choose $f_j\in \cD(Y)$
such that $f_j=\psi_j$ in a neighborhood of $L'$. By (iv), $f_j\circ R - f_j\circ S \in \hat\cN(X)$. 
Therefore, given $k$ and $m$ in $\N_0$ there exists some $N_j\in \N_0$ such that for all $\tilde\phi\in \lsk
{N_j}{\varphi(V)}$ there is some $\eps_j>0$ such that for all $\eps<\eps_j$ and all $x\in \varphi(L)$,
$$
\|D^{(k)}(f_j\circ R_V(\tilde\phi(\eps,x),x)) - D^{(k)}(f_j\circ S_V(\tilde\phi(\eps,x),x))\| \le \eps^m\,.
$$
For $N:=\max \{N_j\mid 1\le j \le m\}$ and due to our choice of $f_j$, this implies the claim.  

(v)$\Rightarrow$(iv) is obvious.

(iv)$\Rightarrow$(v): Using c-boundedness of $R$ and $S$ this immediately follows by multiplying
any given $f\in \cC^\infty(Y)$ with a suitable compactly supported cut-off function.
\end{proof}

As was already indicated in the proofs of Proposition \ref{boundchar} and Theorem \ref{equivchar}, 
it is sometimes advantageous to view the
target manifold as embedded into some ambient $\R^{m'}$. We next analyze this
situation in some more detail (cf.\ \cite{sheaves} for a corresponding discussion in
the special Colombeau setting). 
\begin{definition} \label{embeddef}
Let $Y$ be a (regular) submanifold of $\R^{m'}$. Then $\mvqt{X}{Y}$ is defined to
be the subspace of $\hat \cG(X)^{m'}$ consisting of those elements that possess 
a representative $R$ satisfying
\begin{itemize}
\item[(i)] $R(\ub0X \times X)\subseteq Y$.
\item[(ii)]  $\forall K \csub X$ $\exists L \csub Y$ $\forall \Phi \in \sk0{X}$
 $\exists \e_0>0$ $\forall \e<\e_0$ $\forall p \in K$ $R(\Phi(\e,p),p) \in L$.
\end{itemize}
\end{definition}
Thus we want the representative to map $X$ into $Y$ and to be c-bounded. The next result shows
that for $Y$ embedded into $\R^{m'}$, $\mvqt{X}{Y}$ is indeed isomorphic to $\mvq{X}{Y}$.
\begin{proposition}
Let $i:Y\hookrightarrow \R^{m'}$ be an embedding. Then the map $i_*: \mvq{X}{Y} \to \mvqt{X}{i(Y)}$,
$i_*(R) = i\circ R$ is a bijection that commutes with restriction to open subsets. 
In particular, if $Y$ is a regular submanifold of $\R^{m'}$ we may 
identify $\mvq{X}{Y}$ and $\mvqt{X}{Y}$.
\end{proposition}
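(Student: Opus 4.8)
The plan is to reduce every claim to the scalar-valued characterizations already obtained in Proposition \ref{modchar} and Theorem \ref{equivchar}, the only genuinely new ingredient being that, since $i(Y)$ is a regular submanifold of $\R^{m'}$, compactly supported smooth functions on $i(Y)$ extend to functions in $\cD(\R^{m'})$. First I would check that $i_*$ is well defined. If $R\in\mvm{X}{Y}$, then each component $(\pr_j\circ i)\circ R$ is moderate by Proposition \ref{modchar}(c), since $\pr_j\circ i\in\Cinf(Y)$; hence $i\circ R\in\hat\cE_M(X)^{m'}$ represents an element of $\hat\cG(X)^{m'}$. Condition (i) of Definition \ref{embeddef} is immediate from $R(\ub0X\times X)\subseteq Y$, and condition (ii) follows because $i$ is a homeomorphism onto its image, so it carries the compact set $L\csub Y$ witnessing c-boundedness of $R$ to the compact set $i(L)\csub i(Y)$. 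Thus $i\circ R\in\mvqt{X}{i(Y)}$. Finally, if $R\sim S$ then by Theorem \ref{equivchar}(v) we have $(\pr_j\circ i)\circ R-(\pr_j\circ i)\circ S\in\hat\cN(X)$ for each $j$, i.e.\ $i\circ R-i\circ S\in\hat\cN(X)^{m'}$, so $i\circ R$ and $i\circ S$ determine the same class. Hence $i_*$ is a well-defined map $\mvq{X}{Y}\to\mvqt{X}{i(Y)}$.

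For injectivity, suppose $i_*[R]=i_*[S]$, i.e.\ $i\circ R-i\circ S\in\hat\cN(X)^{m'}$. Given $f\in\cD(Y)$, I would extend $f\circ i^{-1}$ to some $\tilde f\in\cD(\R^{m'})$ (using a tubular-neighbourhood retraction near the compact set $i(\supp f)$ followed by a cut-off); then $f\circ R=\tilde f\circ(i\circ R)$ and $f\circ S=\tilde f\circ(i\circ S)$, since $R,S$ take values in $Y$. A chain-rule/mean-value estimate of the type already used in the proofs of Proposition \ref{modchar} and Theorem \ref{equivchar} — exploiting that $i\circ R$ and $i\circ S$ are moderate and c-bounded, so that the relevant derivatives of $\tilde f$ are evaluated on a fixed compact set — shows that postcomposition with $\tilde f$ maps a negligible difference of c-bounded moderate arguments to a negligible difference. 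Hence $f\circ R-f\circ S\in\hat\cN(X)$ for all $f\in\cD(Y)$, and Theorem \ref{equivchar} ((iv)$\Rightarrow$(i)) yields $R\sim S$.

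For surjectivity, let $T\in\mvqt{X}{i(Y)}$ and pick a representative $R\in\hat\cE_M(X)^{m'}$ satisfying (i) and (ii) of Definition \ref{embeddef}. Since $i(Y)$ is an embedded submanifold and $R$ has image in $i(Y)$, the map $\tilde R\coleq i^{-1}\circ R$ is smooth as a map $\ub0X\times X\to Y$, so $\tilde R\in\mvb{X}{Y}$. It is c-bounded because $i^{-1}$ carries the compact sets $L\csub i(Y)$ from (ii) to compact sets $i^{-1}(L)\csub Y$; and for $f\in\cD(Y)$ the same extension-and-composition argument as above gives $f\circ\tilde R=\tilde f\circ R\in\hat\cE_M(X)$. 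By Proposition \ref{modchar} ((b)$\Rightarrow$(a)) we conclude $\tilde R\in\mvm{X}{Y}$, and clearly $i_*[\tilde R]=[\,i\circ i^{-1}\circ R\,]=[R]=T$, so $i_*$ is onto.

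Finally, commutation with restriction to open subsets is immediate, since $i_*$ is induced by postcomposition with the fixed smooth map $i$ on the target: this is a natural, values-only operation, hence compatible with the sheaf restriction maps of $\hat\cG$, under which the conditions in Definitions \ref{mv_mod}, \ref{equivdef} and \ref{embeddef} (being formulated chartwise) localize. Taking $i$ to be the inclusion of a regular submanifold $Y\subseteq\R^{m'}$ then gives the asserted identification of $\mvq{X}{Y}$ with $\mvqt{X}{Y}$. The step I expect to be the main obstacle is the composition estimate: making precise, within the full-algebra formalism and uniformly in the smoothing-kernel variable and for every order $D^{(k)}$, that postcomposition with $\tilde f\in\cD(\R^{m'})$ preserves both moderateness and negligibility of c-bounded arguments; the extension of $f$ off the submanifold is routine once a tubular neighbourhood is fixed.
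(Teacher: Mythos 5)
Your proof is correct and follows the same overall skeleton as the paper's: well-definedness via Proposition \ref{modchar} and Theorem \ref{equivchar}, surjectivity via Proposition \ref{modchar} applied to $i^{-1}\circ R$, and injectivity by reducing to Theorem \ref{equivchar}. The one place where you genuinely diverge is the injectivity step: the paper equips $Y$ with the pulled-back Euclidean metric $i^*g$ and re-runs the internal estimates of the proof of (iv)$\Rightarrow$(i) directly with the coordinate functions $i_j$, whereas you first establish hypothesis (iv) in full --- $f\circ R-f\circ S\in\hat\cN(X)$ for \emph{every} $f\in\cD(Y)$ --- by extending $f\circ i^{-1}$ to some $\tilde f\in\cD(\R^{m'})$ via a tubular neighbourhood of the compact set $i(\supp f)$, and then invoke the theorem as a black box. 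Both routes work; yours uses the theorem as stated rather than ``as in its proof'', at the price of the composition estimate you flag as the main obstacle. That obstacle is smaller than you fear: since $f\circ R$ and $f\circ S$ are already moderate (Proposition \ref{modchar}), negligibility of their difference only needs to be tested at derivative order $k=0$ (this is exactly the content of (i)$\Leftrightarrow$(ii) in Theorem \ref{equivchar}, resp.\ \cite[Cor.~4.5]{GKSV}), and the zeroth-order estimate is just the Lipschitz bound $|\tilde f(a)-\tilde f(b)|\le C\|a-b\|$ on the compact set containing the values, so no higher-order chain-rule bookkeeping in the smoothing-kernel variable is required.
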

\begin{proof}
Let $R\in \mvm{X}{Y}$. Then by Proposition \ref{modchar}, for each $1\le j\le m'$, $i_j\circ R \in 
\hat \cE(X)$. Moreover, (i) and (ii) from Definition \ref{embeddef} are obviously satisfied. 
If $R$, $S\in \mvm{X}{Y}$ and $R\sim S$ then Theorem \ref{equivchar} implies that
$(i_j\circ R - i_j\circ S) \in \hat \cN(X)$ for each $j$. Thus $i_*(R) = i_*(S)$ in
$\hat \cG(X)^{m'}$, so $i_*$ is well-defined. It is clear from the definitions that
$i_*$ commutes with restrictions to open sets. 

Suppose that $[i_*R] = [i_*S]$ for $R$, $S\in \mvm{X}{Y}$. Then for each $j$, 
$(i_j\circ R - i_j\circ S)\in \hat \cN(X)$. Using the Riemannian metric $i^*g$
induced on $Y$ by the standard Euclidean metric on $\R^{m'}$ it now follows
exactly as in the proof of Theorem \ref{equivchar}, (iv)$\Rightarrow$(i) that
$R\sim S$. Hence $i_*$ is injective. 

Finally, to see that $i_*$ is surjective, suppose that $R\in \hat \cE_M(X)^{m'}$
satisfies (i) and (ii) from Definition \ref{embeddef}. Then by Proposition \ref{modchar},
$i^{-1}\circ R$ is an element of $\mvm{X}{Y}$ whose image under $i_*$ is $R$.
\end{proof}

Finally, we note that the space ${\mathcal C}^\infty(X,Y)$ can naturally be embedded into $\mvq{X}{Y}$
via the map
\begin{eqnarray*}
\sigma: {\mathcal C}^\infty(X,Y) &\hookrightarrow&  \mvq{X}{Y} \\
f &\mapsto& [(\omega,p) \mapsto f(p)].
\end{eqnarray*}

\section{Generalized vector bundle homomorphisms} \label{Hom}

As a natural next step in the development of a theory of manifold-valued generalized functions 
we now introduce a suitable notion of generalized vector bundle homomorphisms. Again
we take our motivation for the concrete form of the definitions below from the case
of special Colombeau algebras (\cite{kun,kun2}).

\begin{definition}
  $\vmb{E}{F}$ is the set of all $s \in \Cinf(\sk0X \times E, F)$ such that $s(\omega, \cdot) \in \Hom(E,F)$ for each $\omega \in \sk0X$.
\end{definition}

The appropriate notions of moderateness and negligibility are as follows. 

\begin{definition}\label{vbmod}
 $s \in \vmb{E}{F}$ is called \emph{moderate} if
\begin{enumerate}[(i)]
 \item $\underline{s} \in \mvm{X}{Y}$,

 \item for all vector bundle charts $(V,\Phi)$ in $E$ and $(W,\Psi)$ in $F$, all $L \csub V$ and $L' \csub W$ and all $k \in \bN_0$ there exists $N \in \bN$ such that for all $\tilde\phi \in \lsk0{\varphi(V)}$
there exists some $\eps_0>0$ such that, for all $\eps<\eps_0$,
$\norm{D^{(k)} (s_{W,V}^{(2)}(\tilde\phi(\e,x),x)) }  \le\e^{-N}$, uniformly for $x \in \varphi(L) \cap \underline{s}_{W,V}(\tilde\phi(\e, \cdot), \cdot)^{-1}(\psi(L'))$.
\end{enumerate}
By $\vmm{X}{Y}$ we denote the space of all moderate elements of $\vmb{X}{Y}$.
\end{definition}

\begin{definition}\label{vbneg}
 Two elements $s,t \in \vmm{E}{F}$ are called \emph{vb-equivalent} (denoted by $s \vbsim t$) if

\begin{enumerate}[(i)]
 \item $\underline{s} \sim \underline{t}$ in $\mvm{X}{Y}$,
 \item for all vector bundle charts $(V,\Phi)$ in $E$ and $(W,\Psi)$ in $F$, all $L \csub V$ and $L' \csub W$ and all $k \in \bN_0$ and $m \in \bN$ there exists $q \in \bN_0$ such that for all $\tilde\phi \in \lsk q{\varphi(V)}$ there exists
 $\eps_0>0$ such that for all $\eps<\eps_0$: 
$$
\norm{D^{(k)} ((s_{W,V}^{(2)}-t_{W,V}^{(2)})(\tilde\phi(\e,x),x)) } \le \e^{-N}$$
uniformly for $x \in \varphi(L) \cap \underline{s}_{W,V}(\tilde\phi(\e, \cdot), \cdot)^{-1}(\psi(L')) \cap \underline{t}_{W,V}(\tilde\phi(\e, \cdot), \cdot)^{-1}(\psi(L'))$.
\end{enumerate}
\end{definition}

By $\vbzsim$ we denote the corresponding relation where (ii) only holds for $k=0$.

\begin{definition}$\vmq{E}{F} \coleq \vmb{E}{F} / \vbsim$ is the space of generalized vector bundle homomorphisms from $E$ to $F$.
\end{definition}

Consider $s \in \vmb{E}{\R\times\R^{m'}}$ and let $(V, \Phi)$ be a vector bundle chart in $E$. From $s_V(\phi, x, \xi) = (s_V^{(1)}(\phi, x), s_V^{(2)}(\phi,x) \cdot \xi)$ one can directly read off the following characterization of moderateness and negligibility for this simple form of the range space.

\begin{lemma}\label{daslemma}Let $s \in \vmb{E}{\R\times\R^{m'}}$.
\begin{enumerate}[(i)]
 \item $s \in \vmm{E}{\R\times\R^{m'}}$ if and only if $\underline{s}$ is c-bounded and for each vector bundle chart $(V, \Phi)$ in $E$, $s_V^{(1)} \in \cE_M(\varphi(V))$ and $s_V^{(2)} \in \cE_M(\varphi(V))^{m'\cdot n'}$.
 \item For $s,t \in \vmm{E}{\R\times\R^{m'}}$, $s \vbsim t$ if and only if $s_V^{(1)} - t_V^{(1)} \in \cN(\varphi(V))$ and $s_V^{(2)} - t_V^{(2)} \in \cN(\varphi(V))$ for each vector bundle chart $(V, \Phi)$ in $E$.
\end{enumerate}
\end{lemma}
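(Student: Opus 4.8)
The plan is to exploit the fact that the target here is the trivial bundle $\R\times\R^{m'}$ over the one-dimensional base $\R$, which is covered by the single identity chart. Consequently $\psi=\id$ and the chartwise local expressions $s^{(1)}_{W,V}$, $s^{(2)}_{W,V}$ reduce to $s^{(1)}_V$, $s^{(2)}_V$; moreover the two defining conditions in Definitions \ref{vbmod} and \ref{vbneg} split cleanly into a ``base part'', controlled by $\underline{s}$ (equivalently by $s^{(1)}_V$), and a ``fibre part'', controlled by $s^{(2)}_V$. I would therefore treat these two pieces separately, the only genuine work being to remove, with the help of c-boundedness, the restriction of the local estimates to the preimage $\underline{s}_V(\tilde\phi(\e,\cdot),\cdot)^{-1}(\psi(L'))$.

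For (i) I would first handle the base part. Condition (i) of Definition \ref{vbmod} reads $\underline{s}\in\mvm{X}{\R}$, and I claim this is equivalent to ``$\underline{s}$ is c-bounded and $s^{(1)}_V\in\cE_M(\varphi(V))$ for every vector bundle chart''. Indeed, by Proposition \ref{modchar} with $Y=\R$, $\underline{s}\in\mvm{X}{\R}$ gives $f\circ\underline{s}\in\hat\cE_M(X)$ for all $f\in\Cinf(\R)$; specializing to $f=\id$ yields $\underline{s}\in\hat\cE_M(X)$, i.e.\ $s^{(1)}_V\in\cE_M(\varphi(V))$ locally, while c-boundedness is part of the definition. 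Conversely, if $\underline{s}$ is c-bounded and $s^{(1)}_V\in\cE_M(\varphi(V))$ then $\underline{s}\in\hat\cE_M(X)$, and since its values remain asymptotically in a fixed compact set, composing with an arbitrary $f\in\Cinf(\R)$ again yields a moderate function, so Proposition \ref{modchar} returns $\underline{s}\in\mvm{X}{\R}$. (Alternatively, since $\R$ is a single chart, this equivalence follows directly from Definition \ref{mv_mod} by the covering argument described below.) For the fibre part I would observe that condition (ii) of Definition \ref{vbmod} is precisely the moderateness estimate for $s^{(2)}_V$ except that it is only imposed on $\varphi(L)\cap\underline{s}_V(\tilde\phi(\e,\cdot),\cdot)^{-1}(\psi(L'))$. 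Given $K\csub\varphi(V)$, c-boundedness of $\underline{s}$ (applied to $\varphi^{-1}(K)$) produces a compact $L_Y\csub\R$ containing the asymptotic image of $\underline{s}$ over $\varphi^{-1}(K)$; choosing $L'$ to be a compact neighbourhood of $L_Y$, the preimage eventually contains all of $K$, so the restricted estimate becomes an unrestricted one. Since the matrix norm of $s^{(2)}_V$ is equivalent to its $m'\cdot n'$ entrywise norms, this is exactly $s^{(2)}_V\in\cE_M(\varphi(V))^{m'\cdot n'}$. Combining the two parts gives (i).

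Part (ii) follows along the same lines, with Theorem \ref{equivchar} replacing Proposition \ref{modchar}. Condition (i) of Definition \ref{vbneg}, $\underline{s}\sim\underline{t}$ in $\mvm{X}{\R}$, is equivalent by Theorem \ref{equivchar} to $(f\circ\underline{s}-f\circ\underline{t})\in\hat\cN(X)$ for all $f\in\Cinf(\R)$; taking $f=\id$ gives $s^{(1)}_V-t^{(1)}_V\in\cN(\varphi(V))$, and the converse uses c-boundedness together with a Taylor estimate (exactly as in the proof of Theorem \ref{equivchar}) to pass from $\underline{s}-\underline{t}\in\hat\cN(X)$ back to $(f\circ\underline{s}-f\circ\underline{t})\in\hat\cN(X)$ for all $f$. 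Condition (ii) of Definition \ref{vbneg} is the negligibility estimate for $s^{(2)}_V-t^{(2)}_V$ restricted to the preimage, and the same covering argument (now choosing $L'$ large enough to capture the asymptotic images of both base maps $\underline{s}$ and $\underline{t}$) shows it is equivalent to $s^{(2)}_V-t^{(2)}_V\in\cN(\varphi(V))$.

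The only genuinely delicate point, and the step I would treat most carefully, is this passage between the preimage-restricted estimates in Definitions \ref{vbmod} and \ref{vbneg} and the unrestricted local conditions of the lemma. The direction ``unrestricted $\Rightarrow$ restricted'' is trivial, as one only shrinks the index set. For the reverse direction the quantifier bookkeeping matters: c-boundedness supplies a \emph{single} compact $L'$, uniform over all smoothing kernels, such that for each kernel the preimage eventually exhausts $\varphi(L)$, whence the $O(\e^{-N})$ (resp.\ $O(\e^{m})$) bound valid on the preimage in fact holds on all of $\varphi(L)$ for sufficiently small $\e$. Because $\R$ is covered by a single chart, no auxiliary covering of the target is needed, and this is precisely what makes the characterization so transparent.
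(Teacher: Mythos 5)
Your argument is correct and is essentially the intended one: the paper gives no proof of Lemma \ref{daslemma} beyond the remark that the characterization can be ``directly read off'' from the decomposition $s_V(\phi,x,\xi)=(s_V^{(1)}(\phi,x),s_V^{(2)}(\phi,x)\cdot\xi)$, and your splitting into a base part (handled via Definition \ref{mv_mod} resp.\ Proposition \ref{modchar} and Theorem \ref{equivchar} with $Y=\R$ and the identity chart) and a fibre part, together with the use of c-boundedness to trade the preimage-restricted estimates of Definitions \ref{vbmod} and \ref{vbneg} for unrestricted ones, is exactly the reading the authors have in mind. The only detail you (like the paper) leave implicit is the routine transfer between the global smoothing kernels $\Phi\in\sk0X$ appearing in the c-boundedness condition and the local kernels $\tilde\phi\in\lsk0{\varphi(V)}$ used in the chartwise tests, which is covered by the standard localization results cited in Section \ref{PN}.
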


From (ii) and \cite[Theorem 2.5.4]{GKOS}
it follows that $\vbsim$ and $\vbzsim$ coincide on $\vmb{E}{\R \times \R^{m'}}$. Moreover, by \cite[Theorems 3.3.15 and 3.3.16]{GKOS} one can replace ``$s_V^{(1)} \in \cE_M(\varphi(V))$'' by ``$\underline{s} \in \hat\cE_M(X)$'' and ``$s_V^{(1)} - t_V^{(1)} \in \cN(\varphi(V))$'' by ``$\underline{s} - \underline{t} \in \hat\cN(X)$'', respectively, in Lemma \ref{daslemma}.

Next, we derive some intrinsic characterizations of the spaces just defined.

\begin{proposition}\label{vbchar}
 Let $s \in \vmb{E}{F}$. The following statements are equivalent:
\begin{enumerate}[(a)]
 \item $s \in \vmm{E}{F}$.
 \item \begin{enumerate}[(i)]
    \item $\underline{s}$ is c-bounded,
    \item $\hat f \circ s \in \vmm{E}{\bR \times \bR^{m'}}$ for all $\hat f \in \Hom_c(F, \bR \times \bR^{m'})$.
    \end{enumerate}
 \item \begin{enumerate}[(i)]
\item $\underline{s}$ is c-bounded,
\item $\hat f \circ s \in \vmm{E}{\bR \times \bR^{m'}}$ for all $\hat f \in \Hom(F, \bR \times \bR^{m'})$.
\end{enumerate}
\end{enumerate}

\end{proposition}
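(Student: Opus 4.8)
The plan is to follow the scheme of Proposition~\ref{modchar}, using Lemma~\ref{daslemma} to translate statements about $\hat f\circ s\in\vmm{E}{\bR\times\bR^{m'}}$ into estimates on the scalar local expressions $(\hat f\circ s)^{(1)}_V$ and $(\hat f\circ s)^{(2)}_V$, and feeding these into Definition~\ref{vbmod}. The implication (c)$\Rightarrow$(b) is immediate, since $\Hom_c(F,\bR\times\bR^{m'})\subseteq\Hom(F,\bR\times\bR^{m'})$.

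For (b)$\Rightarrow$(a) I would argue as follows. Condition (a)(i) asks that $\underline s\in\mvm{X}{Y}$; its c-boundedness is assumed in (b)(i), and for any $f\in\cD(Y)$ the homomorphism $\hat f=(f,0)$ with vanishing fiber part lies in $\Hom_c(F,\bR\times\bR^{m'})$ and has underlying map $\underline{\hat f\circ s}=f\circ\underline s$. Lemma~\ref{daslemma}(i) applied to the assumed $\hat f\circ s\in\vmm{E}{\bR\times\bR^{m'}}$ then gives $(\hat f\circ s)^{(1)}_V\in\cE_M$, i.e.\ moderateness of $f\circ\underline s$; by Proposition~\ref{modchar} this yields $\underline s\in\mvm{X}{Y}$. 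For condition (a)(ii), fix vector bundle charts $(V,\Phi)$ in $E$, $(W,\Psi)$ in $F$ and compacta $L\csub V$, $L'\csub W$, choose $\chi\in\cD(W)$ with $\chi\equiv 1$ near $L'$, and let $\hat f\in\Hom_c(F,\bR\times\bR^{m'})$ have fiber part $\chi\cdot\Psi$ over $W$ (extended by zero). A direct coordinate computation shows $(\hat f\circ s)^{(2)}_V=s^{(2)}_{W,V}$ at all points where $\underline s$ takes values in $\psi(L')$, so the bound $(\hat f\circ s)^{(2)}_V\in\cE_M$ supplied by Lemma~\ref{daslemma}(i) is exactly the estimate required in Definition~\ref{vbmod}(ii).

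For (a)$\Rightarrow$(c) the c-boundedness in (c)(i) is part of (a)(i). Given $\hat f\in\Hom(F,\bR\times\bR^{m'})$ I verify the two conditions of Lemma~\ref{daslemma}(i) for $\hat f\circ s$. The underlying map $\underline{\hat f\circ s}=\underline{\hat f}\circ\underline s$ is c-bounded by Proposition~\ref{boundchar} and moderate by Proposition~\ref{modchar}, which settles the $(1)$-part. For the $(2)$-part, fix a vector bundle chart $(V,\Phi)$ in $E$ and $K\csub V$; using c-boundedness of $\underline s$ (as in the proof of Proposition~\ref{modchar}, (a)$\Rightarrow$(c)) choose a compact $L\csub Y$ absorbing all relevant values of $\underline s$ over $K$, cover $L$ by finitely many vector bundle charts $(W_l,\Psi_l)$ and write $L=\bigcup_l L'_l$ with $L'_l\csub W_l$. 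Over each $W_l$ the homomorphism $\hat f$ has a smooth fiber part $A_l\colon\psi_l(W_l)\to\Lin(\bR^{m'},\bR^{m'})$, and on the region where $\underline s$ maps into $\psi_l(L'_l)$ one has the identity
\[ (\hat f\circ s)^{(2)}_V=A_l\bigl(s^{(1)}_{W_l,V}\bigr)\cdot s^{(2)}_{W_l,V}. \]
Since c-boundedness confines $s^{(1)}_{W_l,V}$ to a compact set on which $A_l$ and all its derivatives are bounded, and $s^{(2)}_{W_l,V}$ obeys the estimate of Definition~\ref{vbmod}(ii), the Leibniz and chain rules give the moderateness bound for $D^{(k)}(\hat f\circ s)^{(2)}_V$; patching over the finite cover yields $(\hat f\circ s)^{(2)}_V\in\cE_M(\varphi(V))$ and hence $\hat f\circ s\in\vmm{E}{\bR\times\bR^{m'}}$.

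The main obstacle is the $(2)$-part of (a)$\Rightarrow$(c): reconciling the range-restricted estimate of Definition~\ref{vbmod}(ii) with the unrestricted moderateness demanded by Lemma~\ref{daslemma}. This is precisely where c-boundedness is indispensable, as it confines the image of $\underline s$ to a compact set, permits a finite covering by vector bundle charts, and furnishes uniform bounds (together with all derivatives) for the transition matrices $A_l$; the remaining product-and-chain-rule estimate combining $A_l(s^{(1)}_{W_l,V})$ with the moderate $s^{(2)}_{W_l,V}$ is then routine.
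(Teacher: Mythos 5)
Your proposal is correct and follows essentially the same route as the paper: the same reduction via Lemma~\ref{daslemma} and Propositions~\ref{boundchar} and \ref{modchar} for the base map, the same finite covering of the compact image of $\underline{s}$ by vector bundle charts together with the product formula $(\hat f\circ s)^{(2)}_V=\hat f^{(2)}_{W_l}\bigl(s^{(1)}_{W_l,V}\bigr)\cdot s^{(2)}_{W_l,V}$ for (a)$\Rightarrow$(c), and the same cut-off homomorphism agreeing with $\Psi$ near $L'$ so that $(\hat f\circ s)^{(2)}_V=s^{(2)}_{W,V}$ there for (b)$\Rightarrow$(a). The only cosmetic difference is your explicit choice of zero fiber part for the homomorphism lifting $f\in\cD(Y)$, which the paper leaves unspecified.
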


\begin{proof}
(a) $\Rightarrow$ (c): We first note that $\underline{s}$ is c-bounded by Definition \ref{vbmod}. For (ii), by Lemma \ref{daslemma} we first have to show that $\underline{\hat f \circ s}$ is c-bounded and an element of $\hat\cE_M(X)$. Because $\underline{s} \in \mvm{X}{Y}$, Proposition \ref{boundchar} implies c-boundedness of $\underline{\hat f \circ s} = \underline{\hat f} \circ \underline{s}$, while Proposition \ref{modchar} implies its moderateness.

It remains to show that for each vector bundle chart $(V, \Phi)$ in $E$, $(\hat f \circ s)_V^{(2)}$ is in $\cE_M(\varphi(V))^{m' \cdot n'}$. For the moderateness test let $\tilde \phi \in \lsk0{\varphi(V))}$, $k \in \bN_0$ and $L \csub V$ be given. Choose $L' \csub Y$ such that $\forall \Phi \in \sk0X$ $\exists \e_0>0$ $\forall \e<\e_0$ $\forall p \in L$: $\underbar{s}(\Phi(\e,p),p) \in L'$. Cover $L'$ by vector bundle charts $(W_l, \Psi_l)$ in $F$ and write $L' = \bigcup L_l'$ with $L_l' \csub W_l$.
For $\e<\e_0$ we then have
\begin{equation}\label{eq3}
L = \bigcup_l L \cap (\underbar{s}((\varphi^* \circ \hat\lambda^{-1})(\tilde \phi(\e,\varphi(\cdot)),\cdot)))^{-1}(W_l).
\end{equation}

For any $(\phi, x) \in ((\hat\lambda \circ \varphi_*) \times \varphi)((\ub0V \times V) \cap \underline{s}^{-1}(W_l))$ we can write
\begin{equation}\label{eq1}
(\hat f \circ s)_V^{(2)} (\phi, x) = \hat f_{W_l}^{(2)} ( s_{W_l,V}^{(1)}(\phi, x)) \cdot s_{W_l,V}^{(2)}(\phi, x).
\end{equation}

In particular, \eqref{eq1} holds for the pair $(\tilde\phi(\e,x),x)$ with $x \in  (\underline{s}((\varphi^* \circ \hat\lambda^{-1})(\tilde \phi(\e, \cdot)), \varphi^{-1}(\cdot)))^{-1}(W_l)$. Because the latter is an open set this even holds for all $x'$ in a neighborhood of $x$. In order to obtain the required moderateness estimate we have to estimate derivatives of $(\hat f \circ s)_V^{(2)}(\phi(\e,x),x)$ uniformly for $x \in \varphi(L)$. Now by the decomposition \eqref{eq3} of $L$, on each of the sets $\varphi(L \cap (\underline{s}((\varphi^* \circ \hat\lambda^{-1})(\tilde\phi(\e,\varphi(\cdot)), \cdot))^{-1}(W_l))$ we can use equation \eqref{eq1}, by which the claim follows from moderateness of $\hat f_{W_l}^{(2)}$, $s_{W_l, V}^{(2)}$ and c-boundedness of $s^{(1)}$.

(c) $\Rightarrow$ (b) is clear.

(b) $\Rightarrow$ (a): We first have to show that $\underline{s}$ is moderate. Using Proposition \ref{modchar} we have to establish that, given any $f \in \cD(Y)$, $f \circ \underline{s}$ is moderate. For this we choose any compactly supported vector bundle homomorphism $\hat f \in \Hom_c(E,F)$ such that $\underline{\hat f} = f$. Then $\hat f \circ s$ is moderate by assumption, which implies that $\underline{\hat f \circ s} = f \circ \underline{s}$ is moderate.

For the second part, take vector bundle charts $(V, \Phi)$ in $E$ and $(W, \Psi)$ in $F$, $L \csub V$ and $L' \csub W$. Choose an open, relatively compact neighborhood $U$ of $L'$ in $W$. For any $l \in \{\,1,\ldots,m\,\}$ choose $\hat f_l \in \Hom_c(F, \R \times \R^{m'})$ such that

\[ 
\hat f_l|_{\pi_F^{-1}(U)} = (\pr_l \times \id_{\R^{m'}}) \circ \Psi|_{\pi_F^{-1}(U)}. 
\]

Let $x \in \varphi(L) \cap \underline{s}_{W,V}(\tilde \phi(\e,\cdot),\cdot)^{-1}(\psi(L'))$. Then for $x'$ in a suitable neighborhood of $x$ we have
\begin{equation}\label{eq2}
 (\hat f_l \circ s)_V^{(2)}(\tilde\phi(\e,x),x) =  s_{W,V}^{(2)}(\tilde\phi(\e,x),x)
\end{equation}
from which the desired estimates follow.
\end{proof}

\begin{theorem} Let $s,t \in \vmm{E}{F}$. The following statements are equivalent:
 \begin{enumerate}[(i)]
  \item $s \vbsim t$.
  \item $s \vbzsim t$.
  \item $\hat f \circ s \vbsim \hat f \circ t$ in $\vmm{E}{\bR \times \bR^{m'}}$ for all $\hat f \in \Hom_c(F, \bR \times \bR^{m'})$.
  \item $\hat f \circ s \vbsim \hat f \circ t$ in $\vmm{E}{\bR \times \bR^{m'}}$ for all $\hat f \in \Hom(F, \bR \times \bR^{m'})$.
 \end{enumerate}
\end{theorem}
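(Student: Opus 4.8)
The plan is to prove the cycle (i) $\Rightarrow$ (ii) $\Rightarrow$ (iv) $\Rightarrow$ (iii) $\Rightarrow$ (i), modelling the argument on Theorem \ref{equivchar} and Proposition \ref{vbchar} and systematically reducing to the trivial range $\bR\times\bR^{m'}$, where by Lemma \ref{daslemma} together with \cite[Theorem 2.5.4]{GKOS} the relations $\vbsim$ and $\vbzsim$ already coincide. The implication (i) $\Rightarrow$ (ii) is immediate, since $\vbzsim$ merely drops the second-component estimate in Definition \ref{vbneg}(ii) to the case $k=0$.

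For (ii) $\Rightarrow$ (iv), I fix $\hat f \in \Hom(F,\bR\times\bR^{m'})$. Because on the trivial range $\vbsim$ and $\vbzsim$ agree, it suffices to check only the $k=0$ data, i.e. $\hat f\circ s \vbzsim \hat f\circ t$. The base-map part reduces to $\underline{\hat f}\circ\underline{s} - \underline{\hat f}\circ\underline{t}\in\hat\cN(X)$, which follows from $\underline{s}\sim\underline{t}$ (part (i) of $\vbzsim$) via Theorem \ref{equivchar} applied to $\underline{\hat f}\in\Cinf(Y)$. For the second component I would use the factorization \eqref{eq1} and write, schematically (suppressing the arguments as in \eqref{eq1}, and with $s^{(1)},s^{(2)},t^{(1)},t^{(2)}$ the relevant local expressions),
\[
(\hat f\circ s)^{(2)}_V - (\hat f\circ t)^{(2)}_V = \bigl[\hat f^{(2)}_{W_l}(s^{(1)}) - \hat f^{(2)}_{W_l}(t^{(1)})\bigr]\cdot s^{(2)} + \hat f^{(2)}_{W_l}(t^{(1)})\cdot\bigl[s^{(2)}-t^{(2)}\bigr].
\]
Here the first summand is negligible of order $0$ because $s^{(1)}-t^{(1)}$ is (Definition \ref{equivdef}(ii) for $k=0$, by the mean value theorem and smoothness of $\hat f^{(2)}_{W_l}$ on the relatively compact image) while $s^{(2)}$ is moderate, and the second because $s^{(2)}-t^{(2)}$ is negligible of order $0$ by (ii) while $\hat f^{(2)}_{W_l}(t^{(1)})$ is bounded thanks to c-boundedness of $\underline{t}$; the covering of the relevant compact set by charts $(W_l,\Psi_l)$ and the intersections with the appropriate preimages are handled exactly as in Proposition \ref{vbchar}. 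The step (iv) $\Rightarrow$ (iii) is trivial by restriction to compactly supported homomorphisms.

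Finally, (iii) $\Rightarrow$ (i) splits along the two conditions of Definition \ref{vbneg}. For $\underline{s}\sim\underline{t}$, given $f\in\cD(Y)$ I would take the compactly supported homomorphism $\hat f(v)\coleq(f(\pi_F(v)),0)$, for which $\underline{\hat f}=f$; then $\hat f\circ s\vbsim\hat f\circ t$ gives $f\circ\underline{s}-f\circ\underline{t}\in\hat\cN(X)$ for every such $f$, whence $\underline{s}\sim\underline{t}$ by Theorem \ref{equivchar}, (iv) $\Rightarrow$ (i). For the second-component estimate at arbitrary order $k$, I would use the cut-off homomorphisms $\hat f_l$ ($l=1,\dots,m$) from the proof of Proposition \ref{vbchar}, chosen so that $\hat f_l|_{\pi_F^{-1}(U)}=(\pr_l\times\id_{\bR^{m'}})\circ\Psi$ on a relatively compact neighbourhood $U$ of $L'$. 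By \eqref{eq2} one then has $(\hat f_l\circ s)^{(2)}_V=s^{(2)}_{W,V}$ and $(\hat f_l\circ t)^{(2)}_V=t^{(2)}_{W,V}$ on the relevant set, so the full (all-$k$) negligibility of $(\hat f_l\circ s)^{(2)}_V-(\hat f_l\circ t)^{(2)}_V$ supplied by $\hat f_l\circ s\vbsim\hat f_l\circ t$ and Lemma \ref{daslemma}(ii) transfers verbatim to $s^{(2)}_{W,V}-t^{(2)}_{W,V}$.

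I expect the main obstacle to be the upgrade from order $0$ to all orders $k$ in the second component: this cannot be carried out directly on the manifold but only after reducing to the trivial range, where \cite[Theorem 2.5.4]{GKOS} provides it. The implications (ii) $\Rightarrow$ (iv) and (iii) $\Rightarrow$ (i) are precisely the two passages through this reduction — the former exploiting that only $k=0$ data are needed to land in the trivial-range case, and the latter exploiting that the cut-off homomorphisms make the composed second component locally coincide with the original one.
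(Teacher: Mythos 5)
Your proposal is correct and follows essentially the same route as the paper: the cycle (i)~$\Rightarrow$~(ii)~$\Rightarrow$~(iv)~$\Rightarrow$~(iii)~$\Rightarrow$~(i), reducing to the trivial range $\bR\times\bR^{m'}$ where Lemma~\ref{daslemma} and \cite[Theorem 2.5.4]{GKOS} make $\vbsim$ and $\vbzsim$ coincide, with the base-map part handled by Theorem~\ref{equivchar} and the second components via the factorization \eqref{eq1} and the cut-off homomorphisms of \eqref{eq2}. Your explicit product-rule decomposition of $(\hat f\circ s)^{(2)}_V-(\hat f\circ t)^{(2)}_V$ just spells out what the paper compresses into its references to Proposition~\ref{vbchar} and \cite[Lemma 2.5]{kun}.
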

\begin{proof}
(i) $\Rightarrow$ (ii) is clear.

(ii) $\Rightarrow$ (iv): let $\hat f \in \Hom(F, \bR \times \bR^{m'})$. By Lemma \ref{daslemma} we have to establish negligibility estimates of order zero for $(\hat f \circ s)_V^{(1)} - (\hat f \circ t)_V^{(1)}$ (equivalently, for $\underline{\hat f \circ s} - \underline{\hat f \circ t}$) and $(\hat f \circ s)_V^{(2)} - (\hat f \circ t)_V^{(2)}$. Fix $L \csub V$ for testing and choose $L' \csub Y$ such that $\forall \Phi \in \sk0X$ $\exists \e_0>0$ $\forall \e_0<0$ $\forall p \in L$: $\underline{s}(\Phi(\e,p),p)) \in L'$ and $\underline{t}(\Phi(\e,p),p)) \in L'$. Cover $L'$ by vector bundle charts $(W_l, \Psi_l)$ in $F$ and choose open sets $L_l'$ with $\overline{L_l'} \csub W_l$ and $L' \subseteq \bigcup_l L_l'$. 

By Theorem \ref{equivchar} for every Riemannian metric $h$ on $Y$ $\exists N \in \bN_0$ such that $\forall \Phi \in \sk{N}{X}$ $\sup_{p \in L}d_h(\underline{s}(\Phi(\e,p),p), \underline{t}(\Phi(\e,p),p))$ converges to $0$ when $\e \to 0$, hence there exists some $\eps_1<\eps_0$ such that for all
 $\e\le\e_1$ and $p \in L$ there exists $l$ such that both $\underline{s}(\Phi(\e,p),p)$ and $\underline{t}(\Phi(\e,p),p)$ are contained in $L_l'$. Now for the first estimate, $s \vbzsim t$ implies $\underline{s} \sim \underline{t}$ by definition, and thus $\underline{\hat f \circ s} - \underline{\hat f \circ t} = \underline{\hat f} \circ \underline{s} - \underline{\hat f} \circ \underline{t} \in \hat\cN(X)$ is implied by Theorem \ref{equivchar}. 

For the second estimate, the norm of
\[ (\hat f \circ s)_V^{(2)}(\tilde \phi(\e,x), x) - (\hat f \circ t)_V^{(2)}(\tilde \phi(\e,x),x) \]
needs to be estimated on $\varphi(L)$ and given $m \in \bN$ it should have, for some $q$, growth of $O(\e^m)$ for all $\tilde \phi \in \lsk{q}{\varphi(V)}$. But this follows from the assumptions using a construction identical to that of (a) $\Rightarrow$ (c) of the previous proposition and \cite[Lemma 2.5]{kun}.

(iv) $\Rightarrow$ (iii) is clear.

(iii) $\Rightarrow$ (i) Given any $f \in \cD(Y)$, choose $\hat f \in \Hom_c(F, \R \times \R^{m'})$ with $\underline{\hat f} = f$; from the assumption we have $f \circ \underline{s} \sim f \circ \underline{t}$ which together with Theorem \ref{equivchar} gives (i) of Definition \ref{vbneg}. A construction as in the proof of Proposition \ref{vbchar} (a) $\Rightarrow$ (c) employing a representation of both $s_{W,V}^{(2)}$ and $t_{W,V}^{(2)}$ as in equation \eqref{eq2} immediately gives (ii) of Definition \ref{vbneg}.
\end{proof}

As in the case of manifold-valued generalized functions also for vector bundle homomorphisms we have a 
natural embedding of smooth maps, again denoted by $\sigma$:
\begin{eqnarray*}
\sigma: \Hom(E,F) &\hookrightarrow&  \vmq{E}{F} \\
s &\mapsto& [(\omega,p) \mapsto s(p)].
\end{eqnarray*}

Based on these notions we may now define the tangent map of any $[R]\in \mvq{X}{Y}$ 
as an appropriate generalized vector bundle homomorphism:
\begin{definition}
For any $[R] \in \mvq{X}{Y}$ we define the tangent map of $[R]$ as
the class of $(\omega,p) \mapsto \tang_p(R(\omega, \cdot))$ in $\vmq{TX}{TY}$.
\end{definition}
It is immediate from the definitions that this gives a well-defined map.
Moreover, for $f\in {\mathcal C}^\infty(X,Y)$ and $R=\sigma(f)$ it follows that 
$T(\sigma(f)) = \sigma(Tf)$. To see this it suffices to note that $\sigma$ locally
commutes with derivation by \cite[Sec.\ 5]{GKSV}.

\vskip1em

\noindent{\bf Acknowledgment} This work was supported by projects P20525, P23714 and Y237 of the Austrian Science Fund FWF.

\end{document}